\documentclass[11pt]{amsart}
\usepackage{xcolor} % (avant tikz)
\usepackage{amsthm,amsmath,graphicx,tikz}
\usetikzlibrary{arrows,calc,matrix}
\usepackage{etex}
\newtheorem{theorem}{Theorem}[section]
\newtheorem{proposition}[theorem]{Proposition} 
\newtheorem{lemma}[theorem]{Lemma}
\newtheorem{corollary}[theorem]{Corollary}
\newtheorem{conjecture}[theorem]{Conjecture}
\newcommand{\binomial}[2]{\left(\begin{array}{c} #1 \\ #2 \end{array}\right)}
\input xy
\xyoption{all}

\begin{document}

\author{Bruce Fontaine} % \and Pierre-Guy Plamondon}
\address{ Bruce Fontaine \\
          Department of Mathematics \\
          310 Malott Hall \\
          Cornell University \\
          Ithaca, NY USA.}
\email{bfontain@math.cornell.edu}

\author{Pierre-Guy Plamondon}
\address{ Pierre-Guy Plamondon \\
          Universit\'e de Paris Sud XI \\
UMR 8628 du CNRS \\
Laboratoire de Math\'ematiques - B\^at. 425 \\
91405 Orsay Cedex \\
France. }          
\email{pierre-guy.plamondon@math.u-psud.fr}
\title{Counting friezes in type $D_n$}

\keywords{Friezes, triangulations, punctured disk, Dynkin types, cluster algebras}

\begin{abstract}
We prove that there is a finite number of friezes in type $D_n$, and we provide a formula to count them.  As a corollary, we obtain formulas to count the number of friezes in types $B_n$, $C_n$ and $G_2$.  We conjecture finiteness (and precise numbers) for other Dynkin types.
\end{abstract}
\maketitle

%--------------------------------------------
\section{Introduction}

% I guess we should recall the standard coxeter Conway Frieze,
% define the idea of a generalized frieze as something like a
% mutation compatible evaluation of all cluster variables as
% integers. Then recall the unitary/non unitary divide and show
% the standard example of the non unitary D_4 frieze.

% I agree that this is where we should define friezes, refering
% to Coxeter-Conway, Caldero-Chapoton and Assem-Reutenauer-Smith.
% We should mention work of Morier-Genoud--Ovsienko--Tabachnikov;
% they proved the D_4 case and conjectured the E_6 case.  We should
% acknowledge the MSRI, since this is where everything started,
% and Dylan Thurston. (PG)

Friezes of type $A_n$ were defined by Coxeter \cite{Coxeter71} and studied by Conway and Coxeter \cite{ConCox73} in the early '70's.  An observation credited to Caldero in \cite{ARS10} is that Fomin and Zelevinsky's cluster algebras \cite{FZ02} allow for a huge generalization of the original definition. In this paper, we are interested in friezes of Dynkin types.

One way to define friezes is to say that they are ring homomorphisms from a cluster algebra to the ring of integers such that all cluster variables are sent to positive integers.  In Dynkin types, a cluster-free definition may be given as follows \cite[Section 3]{ARS10}. Let $C=(C_{i,j})_{n\times n}$ be a Cartan matrix of Dynkin type $\Delta$, and assume that we have an acyclic orientation of the associated Dynkin diagram.  Then a \emph{frieze of type $\Delta$} is a collection of positive integers $a(j,m)$, with $j\in \{1,\ldots,n\}$ and $m\in \mathbb{Z}$, such that
\begin{displaymath}
	a(j,m)a(j,m+1) = 1 + \Big(\prod_{j\to i}a(i,m)^{|C_{i,j}|}\Big)\Big(\prod_{i\to j}a(i,m+1)^{|C_{i,j}|}\Big).
\end{displaymath}
This is conveniently represented as in Figure \ref{figu::frieze}. For friezes of type $D_n$, there is a model developed by Schiffler \cite{Schiffler08} (see also \cite{BM09} and \cite{FST}) involving tagged arcs in a punctured polygon.  We recall this model in section \ref{sect::model}.
\begin{figure}[ht]
\begin{displaymath}
	\xymatrix@-1.5pc{& 1\ar[dr] && 3\ar[dr] && 2\ar[dr] && 2\ar[dr] && 3\ar[dr] && 1\ar[dr] && 3\ar[dr] && \cdots \\
	                 \cdots\ar[ur]\ar[dr]&& 2\ar[dr]\ar[ur] && 5\ar[dr]\ar[ur] && 3\ar[dr]\ar[ur] && 5\ar[dr]\ar[ur] && 2\ar[dr]\ar[ur] && 2\ar[dr]\ar[ur] && 5\ar[dr]\ar[ur] \\
								   \cdots\ar[r] & 3\ar[r]\ar[ur]\ar[dr] & 2\ar[r] & 3\ar[r]\ar[ur]\ar[dr] & 2\ar[r] & 7\ar[r]\ar[ur]\ar[dr] & 4\ar[r] & 7\ar[r]\ar[ur]\ar[dr] & 2\ar[r] & 3\ar[r]\ar[ur]\ar[dr] & 2\ar[r] & 3\ar[r]\ar[ur]\ar[dr] & 2\ar[r] & 3\ar[r]\ar[ur]\ar[dr] & 2\ar[r] & \cdots \\
									 \cdots\ar[ur] &&2\ar[ur] && 2\ar[ur] && 4\ar[ur] && 2\ar[ur] && 2\ar[ur] && 2\ar[ur] && 2\ar[ur] &
	}
\end{displaymath}
\caption{A frieze in type $D_5$.}
\end{figure}\label{figu::frieze}

Conway and Coxeter proved in \cite{ConCox73} that in type $A_n$, there is only a finite number of friezes, and that this number is the $(n+1)$-st Catalan number.  In type $D_4$, Morier-Genoud, Ovsienko and Tabachnikov \cite{MGOT12} proved that there are 51 friezes, a result conjectured by Propp \cite{Propp05} (in fact, they were working with $2$-friezes, which in these small cases are related to friezes).  In this paper, we extend these results to arbitrary $D_n$ types:

\begin{theorem}[\ref{theo::main}]
 The number of $D_n$ friezes is $\sum_{m=1}^n d(m)\binomial{2n-m-1}{n-m}$, where $d(m)$ is the number of divisors of $m$.
\end{theorem}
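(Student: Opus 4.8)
The plan is to translate the problem into Schiffler's combinatorial model (recalled in Section \ref{sect::model}), where a frieze of type $D_n$ becomes an assignment of positive integers to all tagged arcs of the once-punctured $n$-gon, subject to the Ptolemy relations and completely determined by its values on any single tagged triangulation. First I would establish finiteness: as in the Conway--Coxeter argument for type $A_n$, the frieze recurrence together with positivity and integrality bounds every label, so that only finitely many friezes can occur. This reduces the theorem to a counting problem over a finite, explicitly described set of integer labelings.

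Next I would stratify the friezes according to their behaviour at the puncture. The key structural claim is that every frieze splits into a \emph{central part}, recording the arcs incident to the puncture, and a \emph{peripheral part}, a genuine triangulation of the sub-polygon cut out from the boundary. I would introduce an integer parameter $m\in\{1,\ldots,n\}$ measuring the size of the central region (the number of boundary vertices it involves), and show that the two pieces can be chosen independently, so that the total count factors as a sum over $m$ of (number of central configurations of size $m$) times (number of peripheral triangulations). It is worth stressing here that the answer will \emph{exceed} the number of clusters in type $D_n$ (already $50$ versus $51$ for $n=4$), so friezes cannot simply be identified with triangulations; the extra contributions must come from the central strata with $m\geq 2$.

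For the peripheral part I would run the type $A_n$ analysis of Conway and Coxeter on the remaining region: because the central region supplies a distinguished base edge, the relevant triangulations are counted by the ballot-type number $\binomial{2n-m-1}{n-m}$ rather than by a Catalan number. The genuinely new phenomenon --- and the source of the divisor function --- is the central count. Here the puncture carries two tags (plain and notched), producing two radial values whose product is forced by the frieze relation at the branch node to equal $m$; the valid central configurations are therefore in bijection with the ordered factorizations $m=pq$ into positive integers, of which there are exactly $d(m)$. Multiplying the two counts and summing over $m$ yields $\sum_{m=1}^n d(m)\binomial{2n-m-1}{n-m}$.

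I expect the central count to be the main obstacle: one must show precisely that the frieze relations around the puncture collapse to the single condition $pq=m$, that each such factorization extends uniquely and consistently to the rest of the frieze, and that the plain/notched symmetry is correctly accounted for (in particular not double-counted). Handling the tagged arcs rigorously, and checking the degenerate strata $m=1$ (minimal central structure) and $m=n$ (purely central, where $\binomial{n-1}{0}=1$), is the delicate part; a reassuring check is that the formula already returns the known value $51$ for $D_4$.
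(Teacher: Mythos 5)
Your proposal follows essentially the same route as the paper: stratify the friezes by the number $m$ of arcs incident to the puncture, show that the remaining arcs form a weight-one triangulation while the tagged/untagged radial weights $x,y$ are constant and satisfy $xy=m$ (so the central data is an ordered factorization, giving $d(m)$ choices), and count the triangulations with $m$ spokes via Catalan/ballot numbers as $\binom{2n-m-1}{n-m}$. The only real deviation is your proposed first step of proving finiteness by an a priori bound on labels; the paper instead obtains finiteness as a corollary of the structural description itself (its Proposition \ref{prop::description}), and your counting argument likewise makes that preliminary step unnecessary.
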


As a corollary to this and to the results in \cite{ConCox73}, we can count friezes in types $B_n$, $C_n$ and $G_2$ by folding Dynkin diagrams:
\begin{corollary}[\ref{theo::C}, \ref{theo::B}, \ref{theo::G}] The number of friezes in type $B_n$, $C_n$ and $G_2$ is $\sum_{m\leq\sqrt{n+1}}\binomial{2n-m^2+1}{n}$, $\binomial{2n}{n}$ and $9$, respectively. 
\end{corollary}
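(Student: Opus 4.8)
The plan is to exploit the \emph{folding} of Dynkin diagrams. Recall that $C_n$, $B_n$ and $G_2$ arise as quotients $A_{2n-1}/\sigma$, $D_{n+1}/\sigma$ and $D_4/\sigma$, where in the first two cases $\sigma$ is the order-two diagram automorphism (the flip of $A_{2n-1}$, respectively the transposition of the two extremal nodes of the fork of $D_{n+1}$), and in the last case $\sigma$ is the order-three triality of $D_4$. The first step is a folding lemma: if $\Delta'=\Delta/\sigma$ and the acyclic orientation of $\Delta$ is chosen $\sigma$-invariantly, then friezes of type $\Delta'$ are in bijection with the $\sigma$-invariant friezes of type $\Delta$, i.e.\ those with $a(\sigma(j),m)=a(j,m)$ for all $j,m$. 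One direction sends a $\sigma$-invariant frieze to its descent $\bar a$ to the orbits; the other lifts a $\Delta'$-frieze by $a(j,m):=\bar a(\bar j,m)$. The verification is a direct manipulation of the defining recursion: since $a$ is constant on orbits, the exponents $|C_{i,j}|$ in the product over the neighbours of a representative $j$ collect, orbit by orbit, into exactly the folded Cartan entries $|C'_{\bar i,\bar j}|=\sum_{i\in\bar i}|C_{i,j}|$, turning the $\Delta$-recursion at $j$ into the $\Delta'$-recursion at $\bar j$. I expect the only delicate point here to be checking that a $\sigma$-invariant acyclic orientation exists in all three cases (orienting every edge toward the central node works), and that the order-three fold of $D_4$ behaves exactly as the order-two folds.

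Granting the folding lemma, each count reduces to enumerating symmetric friezes of an already understood simply-laced type. For $C_n$ I would use the Conway--Coxeter correspondence \cite{ConCox73} identifying friezes of type $A_{2n-1}$ with triangulations of a convex $(2n+2)$-gon; under this correspondence the flip $\sigma$ becomes the central (rotation-by-$\pi$) symmetry of the polygon, so $C_n$-friezes are counted by the centrally symmetric triangulations of a $(2n+2)$-gon. I would compute this number to be $\binom{2n}{n}=(n+1)C_n$ (with $C_n$ the Catalan number) as follows: because $\sigma$ is a rotation by $\pi$ it fixes no triangle, so every centrally symmetric triangulation contains a unique main diagonal through the center; the triangulation is then determined by this diagonal (one of $n+1$ choices) together with a triangulation of one of the two resulting $(n+2)$-gons (the other being its $\sigma$-image), of which there are $C_n$. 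This yields Theorem~\ref{theo::C}.

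For $B_n$ I would run the same reduction through the punctured-polygon model of Section~\ref{sect::model} and the proof of Theorem~\ref{theo::main}, counting the $D_{n+1}$-friezes invariant under the tagging-swap automorphism $\sigma$. Here the divisor function $d(m)$ appearing in Theorem~\ref{theo::main} should, at the level of the model, record pairs of rotational data whose product is $m$; imposing the extra $\sigma$-symmetry ought to force these two pieces of data to coincide, so that only perfect-square values survive. Writing $m^2$ for the common value, each surviving configuration then contributes a single binomial term and the divisor weight disappears, producing $\sum_{m\le\sqrt{n+1}}\binom{2n-m^2+1}{n}$ and hence Theorem~\ref{theo::B}. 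I expect this to be the main obstacle: one must re-examine precisely which symmetric triangulations of the punctured polygon are preserved by $\sigma$ and justify rigorously why the surviving terms are indexed by $m$ with $m^2\le n+1$ rather than by divisors. Finally, for $G_2$, since the $51$ friezes of type $D_4$ are completely enumerated \cite{MGOT12}, I would simply identify those fixed by the order-three triality $\sigma$; this is a finite check, and I expect to find exactly $9$, giving Theorem~\ref{theo::G}.
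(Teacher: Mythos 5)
Your proposal is correct and shares the paper's global strategy (fold, then count the $G$-invariant friezes of the simply-laced cover), but two of your ingredients are genuinely different from the paper's. For the folding lemma, the paper does not manipulate the recursion at all: it invokes Dupont's identification of $A(\Delta/G)$ with $A(\Delta)/G$ for Dynkin $\Delta$, so that lifting and descending friezes is just composition with the projection of cluster algebras; your orbit-by-orbit collection of Cartan exponents is a more elementary substitute, at the cost of having to fix a $\sigma$-invariant acyclic orientation (which, as you note, exists in all three cases), whereas the cluster-algebra route is orientation-free. For $C_n$, the paper never counts centrally symmetric triangulations: it shows that the weight-$1$ arcs of a $G$-invariant $A_{2n-1}$-frieze form a $G$-invariant triangulation, which by Dupont descends to a cluster of $C_n$; hence every $C_n$-frieze is unitary on some cluster, friezes biject with clusters, and the number $\binom{2n}{n}$ is quoted from Fomin--Zelevinsky's cluster count. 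Your direct enumeration --- a point-symmetric triangulation of the $(2n+2)$-gon can fix no triangle, hence contains a unique diameter, giving $(n+1)$ choices of diameter times $C_n$ triangulations of a half, and $(n+1)C_n=\binom{2n}{n}$ --- is correct and self-contained, in effect re-deriving that cluster count. For $B_n$, the step you flag as the ``main obstacle'' is not actually an obstacle, because it is exactly what the paper's earlier results supply: the diagram automorphism acts on the punctured-polygon model as the tagging swap at the puncture (as you yourself say), the ``two pieces of data'' are the common weight of the untagged spokes and the common weight of the tagged arcs at the puncture, their product equals the number $m$ of spokes by Lemma \ref{lemm::no1}(3), and $G$-invariance forces the two weights to be equal, so only perfect-square $m$ survive, with spoke weight $\sqrt{m}$; this replaces the factor $d(m)$ in Theorem \ref{theo::main} by $1$ on perfect squares and $0$ otherwise, which is the stated formula. (Two slips there: ``rotational data'' is a misnomer --- it is tagging data --- and ``writing $m^2$ for the common value'' should read ``writing $m$ for the common value, so that the number of spokes is $m^2$''.) Finally, for $G_2$ your finite check is precisely the paper's proof: of the $50$ unitary $D_4$-friezes exactly $8$ are triality-invariant, and the unique non-unitary $D_4$-frieze is also invariant, giving $9$.
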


The theory of cluster algebras of Fomin and Zelevinsky \cite{FZ02} provides a way to construct friezes, namely by specializing variables of a given cluster to $1$.  Such friezes are called \emph{unitary friezes} in \cite{MG02}. All friezes of type $A_n$ are obtained in this way; however, Figure \ref{figu::frieze} provides an example of a frieze of type $D_5$ that does not arise in this fashion.  Thus it is worth noting that in types $B_n$, $D_n$ and $G_2$, the number of friezes is strictly greater than the number of clusters (given in \cite[Table 3]{FZ03}).  The sequences of numbers of friezes in types $D_n$ and $B_n$ make up two new entries in the On-Line Encyclopedia of Integer Sequences \cite{OEIS_Dn} \cite{OEIS_Bn}.

Note also that if $C$ is any Cartan matrix of non-Dynkin type, then it follows again from the theory of cluster algebras that there is an infinite number of friezes.

For the other Dynkin types, we propose the following
\begin{conjecture}[\ref{conj::other}]
 The number of friezes of type $E_6$, $E_7$, $E_8$ and $F_4$ is $868$, $4400$, $26952$ and $112$, respectively.
\end{conjecture}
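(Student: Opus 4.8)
The plan is to reduce each of the four exceptional types to a finite, explicit computation, since $E_6$, $E_7$, $E_8$ and $F_4$ are all of finite type. A frieze of type $\Delta$ is completely determined by its values on a single cluster: the defining relation is precisely an exchange relation, so the values $a(j,m)$ for $m$ in one fundamental domain propagate to all of $\mathbb{Z}$, and by the finite-type periodicity of the cluster variables the whole pattern is periodic with only finitely many distinct values. By the Laurent phenomenon together with positivity, every cluster variable is a Laurent polynomial with nonnegative integer coefficients in an initial cluster $(x_1,\dots,x_n)$; hence specializing the $x_i$ to positive integers automatically produces positive rational values, and the only genuine requirement for a frieze is that \emph{all} of the finitely many cluster variables be integers. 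A frieze is therefore nothing but a positive integer point of the initial cluster at which every cluster variable evaluates to an integer, and counting friezes amounts to enumerating these points.

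To turn this into a finite search, the key step is an a priori bound on the entries. Each value $a$ satisfies a relation $a\,a' = 1 + M$, where $M$ is a monomial in the values at neighbouring vertices, so $a$ divides $1+M$. Propagating a minimal entry through the mesh, exactly as in the Conway--Coxeter analysis of type $A_n$, one aims to show that some entry must equal $1$ and then to bound every other entry by an explicit constant depending only on $\Delta$. This would confine the initial cluster $(x_1,\dots,x_n)$ to a finite box, and for each candidate in the box one simply performs the finitely many mutations allowed in finite type and tests integrality and positivity of the resulting cluster variables. Carrying this out for $E_6$, $E_7$ and $E_8$ should yield the three numbers $868$, $4400$ and $26592$.

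For $F_4$ I would avoid a separate search and instead fold, exactly as the corollary handles $B_n$, $C_n$ and $G_2$. Since $F_4$ is the folding of $E_6$ by its order-two diagram automorphism $\sigma$, a frieze of type $F_4$ corresponds to a $\sigma$-invariant frieze of type $E_6$; so once the $E_6$ friezes are enumerated, the $F_4$ count $112$ is obtained by counting those fixed by $\sigma$.

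The main obstacle is twofold. Conceptually, unlike types $A_n$ and $D_n$ there is no combinatorial model of tagged arcs to organize the exceptional friezes, so finiteness itself must be extracted purely from the exchange relations, and the boundedness argument must produce a bound small enough that the search is \emph{provably} exhaustive rather than merely empirical. Computationally, the search space grows quickly with the rank, so for $E_8$ both the feasibility of the enumeration and an independently checkable certificate that no frieze is missed are the delicate points; it is precisely because these two issues are not yet fully resolved that the statement is recorded as a conjecture rather than a theorem.
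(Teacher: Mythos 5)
The first thing to say is that the paper does not prove this statement either: it is recorded as Conjecture \ref{conj::other}, and the supporting computation rests on a further, separate conjecture, namely that the value of a frieze at a node is bounded by the maximal value of that node over the set of \emph{unitary} friezes (those obtained by evaluating all variables in some cluster to $1$). Your overall plan is essentially the paper's: a frieze is determined by its values on one cluster, finiteness should follow from an a priori bound confining the initial cluster to a finite box, the box is then searched exhaustively by computer, and the $F_4$ count is obtained by folding $E_6$ through its order-two diagram automorphism and counting $\sigma$-invariant $E_6$ friezes (this matches the paper's folding lemma). You also correctly identify that the unproven bound is exactly why the statement remains a conjecture rather than a theorem.

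However, the one concrete mechanism you propose for obtaining the bound is provably wrong. You aim ``to show that some entry must equal $1$'' by propagating a minimal entry as in the Conway--Coxeter analysis. This fails outside type $A_n$: by Proposition \ref{prop::description}, type $D_n$ already has friezes containing no entry equal to $1$ --- take the triangulation consisting of all $n$ spokes, each weighted by a divisor $d>1$ of $n$; Lemma \ref{lemm::no1} is devoted precisely to such friezes --- and the ninth $G_2$ frieze of Theorem \ref{theo::G}, with values $2$ and $3$, lifts to a $D_4$ frieze without any $1$. The same phenomenon occurs in the exceptional types, and it is exactly why the number of friezes exceeds the number of clusters there; so no argument forcing an entry $1$ can yield the needed bound. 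The paper's conjectured bound is of a different shape (entries dominated nodewise by the maximum over unitary friezes, a quantity computable in Sage), and since that bound is itself unproven, the enumerations giving $868$, $4400$ and $26592$ are empirical rather than certified --- which is consistent with your own closing assessment, but means your sketched route to an ``explicit constant'' would need to be replaced, not merely completed.
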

Note that the number for type $E_6$ was conjectured already by Propp \cite{Propp05}, and evidence for this number was further obtained by Morier-Genoud, Ovsienko and Tabachnikov \cite{MGOT12}.

Finally, we would like to thank Dylan Thurston for some helpful conversations, MSRI for supporting us during the Cluster Algebras semester where this research began and the Sage mathematics software and community. We would also like to thank Dylan Rupel for his comments on an earlier version of the paper, and Michael Cuntz for pointing out a typo in one of our conjectures and sharing with us some of his computations.  We also thank an anonymous referee for his/her careful reading of the paper and numerous suggestions for improving it.

%--------------------------------------------
\section{Friezes of type $A_n$}
%........................
Let us begin by recalling the main results of Conway and Coxeter on type $A_n$ friezes. Of interest in \cite{ConCox73} were configurations of integers on a diamond grid such that the entries were strictly positive in a strip of height $n+2$, zero outside the strip and every $2$ by $2$ diamond with at least $3$ entries in the strip had determinant $1$. 

For this to occur, both the top and bottom row of the frieze had to be $1$ and as noted above, their main result is that if the height is $n+2$, then the number of such friezes is the $(n+1)$-st Catalan number. This result follows directly from a connection they establish to triangulations of the $(n+3)$-gon.

\begin{theorem}\label{theo::ConCox}\cite{ConCox73} The friezes of height $n+2$ are in correspondence with labellings of the diagonals of an $(n+3)$-gon with positive integers such that each quadrilateral satisfies the Ptolemy relation (see leftmost picture in Figure \ref{figu::relations}).  Moreover, in any frieze, the arcs with label $1$ form a triangulation of the polygon.
\end{theorem}

In the above theorem we consider the boundary arcs of the polygon to have label $1$ and the Ptolemy relation in this scenario is simply that the sum of the products of the labels of the opposite sides of a quadrilateral is the product of the labels of the diagonals. One can easily move back and forth between these two models in the following way: If one considers the middle $n$ entries of a zig-zag column in a frieze of height $n+2$ one can apply these as labels of the edges of a zig-zag triangulation of the $(n+3)$-gon.

Note that since the $A_n$ cluster algebra contains cluster variables corresponding to each diagonal arc of an $(n+3)$-gon (see \cite[Section 12.2]{FZ03bis}) and these variables are related by the Ptolemy relation, we immediately see that the Conway and Coxeter formulation of friezes is either an evalutation of the cluster variables so that each is a strictly positive integer or a ring map from the cluster algebra to the ring of integers so that each cluster variable maps to a strictly positive integer.

%--------------------------------------------
\section{Friezes of type $D_n$}
%........................
\subsection{Triangulations of the punctured polygon}\label{sect::model}
Recall that we defined a frieze as an evaluation of all cluster variables, where each variable is evaluated in a positive integer. We will now describe a geometric model due to Schiffler \cite{Schiffler08}.

Let $n\geq 4$ be an integer. Consider a \emph{once punctured $n$-gon} $\mathcal{P}_n$, that is, a connected orientable Riemann surface with one boundary component containing $n$ \emph{marked points} and one marked point in its interior, called the \emph{puncture}.  An \emph{arc} is an isotopy class of paths in $\mathcal{P}_n$ whose endpoints are marked points, which are not self-intersecting (except perhaps at the endpoints), whose interior are in the interior of $\mathcal{P}_n$ and which are not isotopic to a path contained on the boundary of $\mathcal{P}_n$.

A \emph{tagged arc} is an arc together with a possible ``notch'' at each of its endpoints.  We represent notches by  ``bowties'', see Figure \ref{figu::relations}.  The notches are asked to satisfy the following rules:
\begin{itemize}
 \item endpoints on the boundary are not notched;
 \item if both endpoints of an arc are the same marked point, then the endpoints are either both notched or both not notched.
\end{itemize}
We further require that tagged arcs do not cut out a once-punctured monogon.

Two tagged arcs $\alpha$ and $\beta$ are \emph{compatible} if one the following holds:
\begin{enumerate}
 \item $\alpha$ and $\beta$ are the same tagged arc, or
 \item at least one of $\alpha$ and $\beta$ has both endpoints on the boundary of $\mathcal{P}_n$, and the arcs can be represented in such a way that their interiors do not cross, or
 \item $\alpha$ and $\beta$ both have the puncture as an endpoint but their other endpoint differ, and they are both notched or both unnotched at the puncture, or
 \item $\alpha$ and $\beta$ have the same endpoints, one of which is the puncture, and exactly one of them is notched at the puncture.
\end{enumerate}

A maximal collection of pairwise compatible tagged arcs of $\mathcal{P}_n$ is a \emph{tagged triangulation}.  An example is given in Figure \ref{figu::triangulation}.  All tagged triangulations of $\mathcal{P}_n$ have exactly $n$ distinct arcs.

\begin{theorem}[\cite{Schiffler08}]The cluster variables of $D_n$ correspond to the (tagged) arcs in a once punctured $n$-gon. Moreover, the exchange relations are those of Figure \ref{figu::relations}.

\end{theorem}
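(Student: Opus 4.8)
The plan is to deduce the theorem from the general theory of cluster algebras associated to bordered surfaces with marked points, as developed in \cite{FST}. Realize the once-punctured $n$-gon as the bordered surface $(S,M)$ with $S$ a disk, with $n$ marked points $1,\ldots,n$ placed cyclically on $\partial S$, and with a single marked point $P$ in the interior (the puncture). The main theorem of \cite{FST} attaches to $(S,M)$ a cluster algebra in which the cluster variables are the tagged arcs, the clusters are the tagged triangulations, seed mutation corresponds to flipping a single arc, and the exchange relations are the (generalized) Ptolemy relations read off from the quadrilateral surrounding the flipped arc. It therefore suffices to (i) check that the cluster algebra of the once-punctured $n$-gon is of finite type $D_n$, and (ii) translate these Ptolemy relations into the relations displayed in Figure \ref{figu::relations}.

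For (i) I would compute the type directly from a convenient triangulation. Take the fan triangulation $T$ consisting of the $n$ arcs $\gamma_1,\ldots,\gamma_n$ joining the puncture $P$ to each boundary vertex; these already cut $S$ into the $n$ triangles $(P,i,i+1)$, so the rank of the cluster algebra is $n$, matching $D_n$. Each such triangle has exactly two arc-sides $\gamma_i,\gamma_{i+1}$ (its third side lies on $\partial S$), so the signed adjacency quiver $Q_T$ is an oriented $n$-cycle on $\gamma_1,\ldots,\gamma_n$. A short mutation sequence then brings $Q_T$ into the mutation class of the Dynkin quiver of type $D_n$ (for $n=4$, mutating the $4$-cycle at two suitably chosen vertices already yields the star); since finite type and the mutation class are mutation invariants, this identifies the cluster algebra as the one of type $D_n$.

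Granting (i), finiteness of type $D_n$ gives a bijection between tagged arcs and cluster variables. I would make it explicit by enumerating the tagged arcs. The arcs joining a boundary vertex to $P$ come in a plain and a notched version, contributing $2n$; the boundary-to-boundary arcs contribute one class for each pair of adjacent vertices (the arc enclosing the puncture) and two classes for each pair of non-adjacent vertices (according to which side of the arc contains $P$), for a total of $n+2\cdot\frac{n(n-3)}{2}=n(n-2)$. This gives $n(n-2)+2n=n^2$ tagged arcs, which equals the number of almost positive roots, hence of cluster variables, in type $D_n$. Matching these to the frieze entries $a(j,m)$ completes the first assertion.

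The ``moreover'' part is obtained by specializing the Ptolemy relation to each type of flip. The main obstacle is precisely the presence of the puncture: ordinary arcs do not yield a consistent exchange pattern, which is why tagged arcs and the resolution of self-folded triangles are needed. The delicate cases are the flips that create or destroy a self-folded triangle and those exchanging a plain arc to the puncture with its notched counterpart; these are exactly the configurations producing the non-generic exchange relations in Figure \ref{figu::relations}, and one must check that the tagged Ptolemy rule of \cite{FST} reproduces each of them. Once every case is verified against the figure, the theorem follows.
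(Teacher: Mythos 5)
The paper itself contains no proof of this statement: it is imported wholesale from Schiffler \cite{Schiffler08} (with \cite{BM09} and \cite{FST} cited as companions), so the honest comparison is with Schiffler's proof, and your route is genuinely different from his. Schiffler constructs by hand a geometric model of the cluster category of type $D_n$ --- a translation quiver whose vertices are the tagged arcs of the punctured polygon, together with an equivalence to the cluster category --- and the arc/variable correspondence and the relations of Figure \ref{figu::relations} are extracted from that equivalence (see also \cite{BM09} for the relations in the frieze setting). You instead invoke the general Fomin--Shapiro--Thurston machinery: their main theorem gives, for any marked surface with nonempty boundary (so the punctured disk qualifies), the bijection between tagged arcs and cluster variables and between flips and mutations, reducing everything to (i) identifying the type and (ii) computing the local exchange relations. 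This is the standard modern argument; it buys brevity and generality, at the cost of the categorical structure that Schiffler's model supplies.

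As written, though, two steps of your sketch are too thin. First, the type identification is only verified for $n=4$; for general $n$ you should either complete the mutation argument (mutating the oriented $n$-cycle at one vertex yields an oriented $(n-1)$-cycle with a $3$-cycle glued along an arrow, and one must still continue down to the Dynkin quiver) or simply cite \cite{FST}, where the once-punctured $n$-gon is explicitly identified as a realization of type $D_n$. Relatedly, the sentence ``finiteness of type $D_n$ gives a bijection between tagged arcs and cluster variables'' is backwards: the bijection is the content of the FST theorem, not a consequence of finiteness, and your count $n^2=n^2$ is a consistency check, not a construction of a bijection. Second, and more seriously, the ``moreover'' part is where the substance lies for this paper --- every argument in Section 2 runs on the explicit relations $xy=ac+bd$, $xy=a+b$ and $xy=bc+atu$ of Figure \ref{figu::relations} --- and you defer precisely the delicate cases: flips involving self-folded triangles and changes of tagging, i.e.\ the middle and right pictures, including their tagged variants noted in the caption. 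Until those cases are actually computed from the tagged exchange matrices of \cite{FST}, the proposal is an outline of a proof rather than a proof.
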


\begin{center}
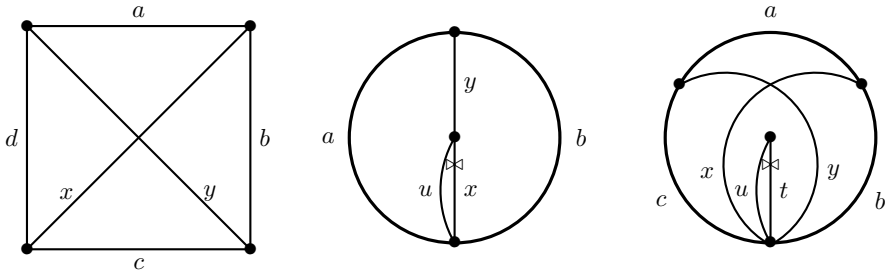
\begin{figure}[h]
\begin{tikzpicture}[scale=0.7]
\foreach \x in {1,2,...,4} {
\draw (90*\x+45:3) node {$\bullet$} ;
} ;
\draw[thick] (45:3) -- (135:3) node[midway, above, scale=0.85] {$a$} ;
\draw[thick] (135:3) -- (-135:3) node[midway, left, scale=0.85] {$d$} ;
\draw[thick] (-135:3) -- (-45:3) node[midway, below, scale=0.85] {$c$} ;
\draw[thick] (-45:3) -- (45:3) node[midway, right, scale=0.85] {$b$} ;
\draw[thick] (45:3) -- (-135:3) node[near end, left, scale=0.85] {$x$} ;
\draw[thick] (135:3) -- (-45:3) node[near end, right, scale=0.85] {$y$} ;

\begin{scope}[xshift=6cm]
\draw[very thick] (0,0) circle (2) ;
\draw (0,2) node {$\bullet$} ;
\draw (0,-2) node {$\bullet$} ;
\draw (-2.4,0) node[scale=0.85] {$a$} ;
\draw (2.4,0) node[scale=0.85] {$b$} ;
\draw (0,0) node {$\bullet$} ;

\draw[thick] (0,0) -- (0,2) node[midway, right, scale=0.85] {$y$} ;
\draw[thick] (0,-2) -- (0,0) node[midway, right, scale=0.85] {$x$} node[near end, sloped, rotate=90, scale=0.75] {$\bowtie$} ;
\draw[thick] (0,0) arc (150:210:2);
\draw (-0.55,-1) node[scale=0.85] {$u$} ;

\end{scope}

\begin{scope}[xshift=12cm]
\draw[very thick] (0,0) circle (2) ;
\draw (30:2) node {$\bullet$} ;
\draw (150:2) node {$\bullet$} ;
\draw (-90:2) node {$\bullet$} ;
\draw (90:2.4) node[scale=0.85] {$a$} ;
\draw (-30:2.4) node[scale=0.85] {$b$} ;
\draw (-150:2.4) node[scale=0.85] {$c$} ;
\draw (0,0) node {$\bullet$} ;

\draw[thick] (150:2) arc (120:-60:1.75);
\draw (-1.2,-0.66) node[scale=0.85] {$x$} ;
\draw[thick] (30:2) arc (60:240:1.75);
\draw (1.2,-0.66) node[scale=0.85] {$y$} ;
\draw[thick] (0,-2) -- (0,0) node[midway, right, scale=0.85] {$t$} node[near end, sloped, rotate=90, scale=0.75] {$\bowtie$} ;
\draw[thick] (0,0) arc (150:210:2) ;
\draw (-0.55,-1) node[scale=0.85] {$u$} ;

\end{scope}

\end{tikzpicture}
\caption{Ptolemy relation $xy=ac+bd$ (left) and other relations $xy = a+b$  (middle) and $xy=bc+atu$ (right). The middle relation also holds if $u$ and $y$ are tagged and $x$ is untagged, and the left one does for any tagging.}
\label{figu::relations}
\end{figure}
\end{center}

Thus, as is noted in \cite{BM09}, a $D_n$ frieze is simply a choice of positive integer weight for each (tagged) arc in the punctured disk model, satisfying the relations of Figure \ref{figu::relations} (and where boundary arcs are always assumed to have weight $1$).  In the rest of the paper, this is the point of view from which we will view friezes.

%.......................
\subsection{Description of all friezes}
We will prove the following proposition, which describes the friezes in type $D_n$ and ensures that there is only a finite number of them.

\begin{proposition}\label{prop::description}
 From any frieze of type $D_n$ can be extracted a unique tagged triangulation $T$ of the punctured $n$-gon in such a way that
 \begin{enumerate}
  \item $T$ contains all arcs of weight $1$ which are not notched;
  \item all arcs of $T$ connecting marked points on the boundary have weight $1$;
  \item either $T$ has only two arcs incident with the puncture, both having the same endpoints, different notchings and weight $1$, or the $m$ arcs of $T$ incident with the puncture are not notched and all have the same weight, which can be any divisor of $m$.
 \end{enumerate} 
 In particular, there is only a finite number of friezes of type $D_n$.
\end{proposition}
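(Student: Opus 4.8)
The plan is to reformulate everything in the arc model of Figure \ref{figu::relations}: a frieze is a positive-integer weight on every tagged arc, and the three relations of Figure \ref{figu::relations} govern flips. The first and easiest step is a \emph{compatibility lemma}: any two weight-$1$ arcs are compatible. Indeed, if two arcs $x,y$ are incompatible then they appear as the exchanged pair in one of the relations of Figure \ref{figu::relations}, so that $xy$ equals a sum of products of positive integers, hence $xy\ge 2$; thus $x$ and $y$ cannot both have weight $1$. Consequently the set $P$ of all weight-$1$ arcs is a partial tagged triangulation, canonically attached to the frieze, and any $T$ satisfying property (1) must contain $P$.

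Next I would pin down the shape of $P$ using the type $A$ result of Conway and Coxeter \cite{ConCox73}. Restricting a frieze to any sub-polygon of the $n$-gon that avoids the puncture yields a genuine type $A$ frieze (only the Ptolemy relation is involved), and by \cite{ConCox73} such a frieze realizes a triangulation all of whose diagonals have weight $1$. Hence any face of $P$ not meeting the puncture must already be a triangle, for otherwise it would contain an internal weight-$1$ diagonal lying outside $P$, a contradiction. Therefore the only non-triangulated region is the star $R$ of the puncture, a once-punctured $m$-gon whose sides all have weight $1$ (being either boundary segments or arcs of $P$). Property (2) forbids any weight-$\geq 2$ boundary-to-boundary arc, and $R$ contains no weight-$1$ arc in its interior, so inside $R$ the triangulation $T$ can use only arcs incident to the puncture: it must be the fan $r_0,\dots,r_{m-1}$ from the puncture to all $m$ vertices of $R$. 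This simultaneously gives \emph{existence} and \emph{uniqueness} of $T$ and establishes (1) and (2); the taggings at the puncture must all agree, so we may take them untagged.

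For property (3) I would first prove the radii are equal. Flipping $r_i$ in the quadrilateral $v_{i-1},\,\text{pt},\,v_{i+1},\,v_i$ gives, by the Ptolemy relation (valid for any tagging per the caption of Figure \ref{figu::relations}),
\[
 r_{i-1}+r_{i+1}=e_i\,r_i,\qquad e_i=\mathrm{wt}(v_{i-1}v_{i+1}).
\]
The chord $v_{i-1}v_{i+1}$ lies inside $R$ and crosses $r_i\in T$, so it is not in $P$ and hence $e_i\ge 2$. Thus $r_{i+1}-r_i\ge r_i-r_{i-1}$ for all $i$ cyclically, i.e. the first differences are non-decreasing around the cycle; since they must return to their starting value and sum to zero, they all vanish, forcing $r_i\equiv w$ constant and $e_i\equiv 2$. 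The same computation applied to the all-notched fan (whose flips produce the \emph{same} untagged chords $e_i$) shows the notched radii are constant as well, say $\bar r_i\equiv \bar w$.

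The remaining point, and the one I expect to be the \textbf{main obstacle}, is the divisibility $w\mid m$. Here I would use the factorization of the loop $\ell$ encircling the puncture as the product of the two tagged arcs at a vertex, $\ell=r_i\bar r_i=w\bar w$, and compute $\ell$ by transporting the relevant relation of Figure \ref{figu::relations} (the ``$\bowtie$'' relations in the middle and right pictures) once around the puncture through all $m$ sectors. Carrying out this iteration with $e_i\equiv 2$ should express the loop weight as an explicit function of $w$ and $m$, so that integrality of the notched radius $\bar w=\ell/w$ holds precisely when $w$ divides $m$; this yields exactly the $d(m)$ admissible common weights. Finiteness of the number of friezes is then immediate: there are finitely many tagged triangulations of the punctured $n$-gon, and for each the above analysis leaves only finitely many admissible weightings. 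The delicate bookkeeping of the tagged relations around the puncture, and the resulting closed form for $\ell$, is where the real work lies.
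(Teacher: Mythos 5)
Your first half is sound and essentially coincides with the paper's proof: the observation that crossing arcs force $xy\ge 2$ (so weight-$1$ arcs are pairwise compatible), the identification of $T$ as the weight-$1$ arcs together with the fan of spokes in the star $R$ of the puncture, and the equality of all spoke weights (your cyclic ``non-decreasing first differences'' argument is a clean variant of the paper's maximality argument, and both yield $e_i\equiv 2$). Two caveats there: your description of $R$ as a once-punctured $m$-gon silently assumes that no spoke has weight $1$; if a spoke lies in $P$, the face of $P$ containing the puncture is not of that form, and the paper handles this case by a separate lemma of Hugh Thomas (\cite[Proposition A.2]{BM09}), concluding that the frieze then contains a full weight-$1$ triangulation. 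Relatedly, a frieze whose \emph{notched} spokes all have weight $1$ forces tagged arcs into $T$, so ``we may take them untagged'' requires invoking the plain/notched symmetry, not just a convention.

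The genuine gap is the heart of part (3), which you explicitly defer: you never derive $w\bar w=m$ (equivalently, that the loop around the puncture has weight $m$); you only predict that iterating the tagged relations around the puncture ``should'' produce such an identity. Note that without it you do not even get finiteness: equal spoke weights alone put no bound on $w$. The paper closes this in two short steps, both available to you at that stage of your argument. First, an induction on $d$ using Ptolemy and the equal spoke weights shows that any arc of $R$ cutting off an unpunctured $(d+1)$-gon has weight exactly $d$: the base case $d=2$ is $e_i=2$, and if $z$ cuts off a $(d+2)$-gon on vertices $v_i,\dots,v_{i+d+1}$, then crossing $z$ with the spoke at $v_{i+d}$ gives $wz=w\cdot d+w\cdot 1$, so $z=d+1$. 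Second, one application of the relation $xy=bc+atu$ on the right of Figure~\ref{figu::relations}, taken in the punctured $m$-gon with $C$ a vertex of $R$ and $A,B$ its two neighbours: there $x$ and $y$ each cut off an unpunctured $m$-gon and so have weight $m-1$, the arc $a$ cuts off an $(m-1)$-gon and has weight $m-2$, and $b=c=1$, whence $(m-1)^2=1+(m-2)\,w\bar w$, i.e.\ $w\bar w=m$ (for $m\ge 3$; for $m=2$ the middle relation of Figure~\ref{figu::relations} gives $w\bar w=2$ directly). Since $w$ and $\bar w$ are positive integers, $w\mid m$, which is exactly the divisibility you were missing. Your proposed route of ``transporting the $\bowtie$ relations around the puncture'' is plausible, but the bookkeeping it requires is precisely this induction plus the closing relation, and as written your proposal does not supply it.
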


% Insert example of such a triangulation in a, say, 8-gon.
\begin{center}
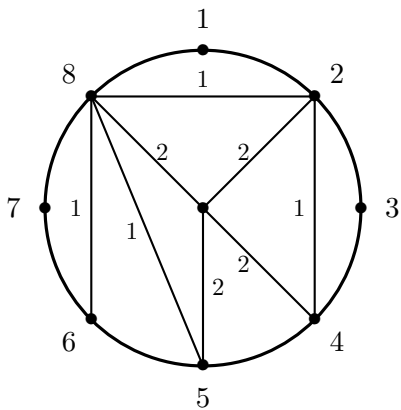
\begin{figure}%\label{figu::triangulation}
\begin{tikzpicture}[scale=0.7]
\draw[very thick] (0,0) circle (3) ;
\foreach \x in {1,2,...,8} {
\draw (-45*\x:3) node {$\bullet$} ;
\draw (-45*\x+135 :3.6) node {\x} ;
} ;
\draw (0,0) node {$\bullet$} ;
\draw[thick] (135:3) -- (45:3) node[midway, above, scale=0.85] {1} ;
\draw[thick] (-45:3) -- (45:3) node[midway, left, scale=0.85] {1} ;
\draw[thick] (135:3) -- (-90:3) node[midway, left, scale=0.85] {1} ;
\draw[thick] (135:3) -- (-135:3) node[midway, left, scale=0.85] {1} ;
\draw[thick] (135:3) -- (0,0) node[midway, right, scale=0.85] {2} ;
\draw[thick] (45:3) -- (0,0) node[midway, left, scale=0.85] {2} ;
\draw[thick] (-45:3) -- (0,0) node[midway, left, scale=0.85] {2} ;
\draw[thick] (-90:3) -- (0,0) node[midway, right, scale=0.85] {2} ;

%\draw[thick] (60:3) -- (0,0) node[midway, right, scale=0.85] {$\tau a$} 
%node[near end, sloped, rotate=90, scale=0.75] {$\bowtie$} ;

\end{tikzpicture}
\caption{An example of a triangulation as in Proposition \ref{prop::description}.}
\label{figu::triangulation}
\end{figure}
\end{center}

Figure \ref{figu::triangulation} gives an example of a triangulation satisfying (1), (2) and (3). If such a triangulation exists for a given frieze, then its uniqueness is clear.  We prove its existence in several steps.  First, we show that there is indeed a triangulation containing all the arcs of weight $1$ of the frieze:

\begin{lemma}
 Two arcs of weight $1$ in a frieze of type $D_n$ cannot be incompatible.
\end{lemma}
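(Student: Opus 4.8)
The plan is to argue by contradiction, showing that any crossing forces the product of the two arcs' weights to be at least $2$. The engine is the collection of exchange relations in Figure \ref{figu::relations}: each of them expresses the product of two crossing arcs as a sum of two terms, and every factor occurring on a right-hand side is the weight of an arc (a positive integer by the definition of a frieze) or of a boundary segment (conventionally $1$), hence in all cases a positive integer. Explicitly, the Ptolemy relation reads $xy=ac+bd$, the middle relation reads $xy=a+b$, and the right-hand relation reads $xy=bc+atu$; since every factor is $\geq 1$, each right-hand side is a sum of two positive integers and is therefore $\geq 2$. Thus if two crossing arcs $x$ and $y$ both had weight $1$, we would obtain $1=xy\geq 2$, a contradiction.

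The substance of the argument is therefore to check that \emph{every} crossing of two arcs is governed by one of these three relations. First I would treat the case where both arcs join marked points on the boundary: their four endpoints span a quadrilateral whose diagonals are exactly the two arcs, so the Ptolemy relation applies directly, giving $xy=ac+bd\geq 2$. As recorded in the caption of Figure \ref{figu::relations}, this relation is valid for any tagging, and it remains valid irrespective of whether the puncture lies inside the quadrilateral, so no extra subcases arise here. Next I would treat the crossings in which at least one of the two arcs is incident with the puncture; depending on the tagging of the arcs at the puncture, such a crossing is resolved by either the middle relation or the right-hand relation of Figure \ref{figu::relations}, and in each instance the bound $xy\geq 2$ again follows from the displayed formulas.

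I expect the main obstacle to be this second part: the careful enumeration of the crossing configurations that involve the puncture and the notched (tagged) arcs, matching each one to the correct relation (middle or right) and verifying that the relation genuinely presents $xy$ as a sum of two positive-integer terms. In particular one must be attentive to the tagged versions of an arc at the puncture, which are regarded as crossing even when they share endpoints, and confirm that these too fall under the middle or right relation. Once the case analysis is complete, the uniform lower bound $xy\geq 2$ yields the contradiction $1\geq 2$ in every case, establishing that two arcs of weight $1$ cannot cross.
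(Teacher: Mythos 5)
Your proposal is correct and follows essentially the same argument as the paper: every crossing is governed by an exchange relation of the form $xy = m_1 + m_2$ with $m_1, m_2$ positive integers, so $xy \geq 2$, which rules out two crossing arcs both having weight $1$. The paper states this in two sentences, while you additionally spell out the case analysis over the three relations of Figure \ref{figu::relations}; this extra detail is harmless but not a different approach.
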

\begin{proof}{ When two arcs are incompatible, then their weights, say $x$ and $y$, have to satisfy a relation of the form $xy = m_1 + m_2$, where $m_1$ and $m_2$ are monomials in the weights of the other arcs (see Figure \ref{figu::relations}).  In particular, $xy$ is at least $2$, so $x$ and $y$ cannot both be equal to $1$.
}
\end{proof}

Let $T_0$ be the triangulation of the punctured $n$-gon defined in the three following steps.  

\emph{Step 1.} Include in $T_0$ all arcs of weight $1$ that are not incident to the puncture.  Any such arc cuts $\mathcal{P}_n$ into a smaller punctured polygon an a smaller unpunctured polygon.  The latter defines a smaller frieze of type $A$, and by the results of \cite{ConCox73} (see Theorem \ref{theo::ConCox}), it contains a unique triangulation of arcs with weight $1$.  Thus, by adding all these arcs in $T_0$, we obtain a smaller punctured disc to which are glued unpunctured discs, each with a triangulation of $1$'s.

\emph{Step 2.} Add in $T_0$ all arcs (notched or not) from the boundary to the puncture which have weight $1$.  By the following Lemma, proved by Hugh Thomas in an appendix to \cite[Proposition A.2]{BM09}, either we add no arcs in Step 2, or we add enough arcs to make $T_0$ into a triangulation:

\begin{lemma}
 If one of the arcs of a frieze of type $D_n$ incident with the puncture has weight $1$, then the frieze contains a triangulation of arcs of weight $1$.  In particular, if one of the arcs of $T_0$ incident with the puncture has weight $1$, then all arcs of $T_0$ have weight $1$.
\end{lemma}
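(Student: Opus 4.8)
The plan is to reduce the statement to the classical type $A$ situation by cutting the punctured polygon open along the given weight-$1$ arc. Throughout I may assume, after possibly exchanging all taggings (which is a symmetry of friezes), that the puncture-incident arc of weight $1$ is the untagged radial arc $\ell_p$ at some boundary vertex $p$. I first dispose of the ``in particular'' clause: by the previous lemma weight-$1$ arcs are pairwise non-crossing, so every weight-$1$ arc lies in $T_0$ by construction; if the frieze contains any weight-$1$ tagged triangulation $T'$, then $T'\subseteq T_0$, and since both are triangulations they have the same number $n$ of arcs, forcing $T'=T_0$ and hence that every arc of $T_0$ has weight $1$. It therefore suffices to produce a single weight-$1$ triangulation.

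To produce one, I cut the once-punctured $n$-gon along $\ell_p$, slitting from the vertex $p$ to the puncture. Topologically this opens the puncture and turns the surface into an unpunctured disk whose boundary carries $n+2$ marked points: the two copies $p^+,p^-$ of $p$, the image $P^\ast$ of the puncture, and the remaining $n-1$ original vertices. The two sides of the slit are two boundary edges each equal to $\ell_p$, and the crucial point is that the hypothesis $\ell_p=1$ makes these edges have weight $1$; as the original boundary edges of the $n$-gon already have weight $1$, every boundary edge of the new $(n+2)$-gon has weight $1$. I then claim that the weights of the arcs of the $D_n$-frieze which do not cross $\ell_p$ constitute a Conway--Coxeter frieze of type $A_{n-1}$ on this $(n+2)$-gon: the Ptolemy relation (left of Figure \ref{figu::relations}) is visibly the ordinary one, while the two special relations (middle and right of Figure \ref{figu::relations}), which encode the self-folded-triangle and tagged-arc configurations at the puncture, are to be shown to become ordinary Ptolemy relations once the puncture has been opened along $\ell_p$.

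Granting this, Conway--Coxeter \cite{ConCox73} shows that the resulting type $A$ frieze arises from a triangulation $\mathcal{T}$ of the $(n+2)$-gon, all $n-1$ diagonals of which have weight $1$. Re-gluing $p^+$ to $p^-$ and collapsing $P^\ast$ back to the puncture, the two slit edges become the single arc $\ell_p$, and the diagonals of $\mathcal{T}$ become arcs of the punctured $n$-gon of the same (weight-$1$) weights; together with $\ell_p$ this gives $(n-1)+1=n$ pairwise compatible weight-$1$ arcs, that is, a weight-$1$ tagged triangulation, as required.

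The main obstacle is the claim of the second paragraph: one must verify that cutting along the weight-$1$ arc $\ell_p$ genuinely converts the $D_n$-frieze into an honest Conway--Coxeter frieze, i.e. that the non-Ptolemy relations of Figure \ref{figu::relations} and the tagged arcs incident to the puncture unfold to ordinary arcs and ordinary Ptolemy relations in the cut-open disk. This is exactly the bookkeeping between tagged arcs in the punctured polygon and arcs in its cut-open disk; once it is settled, the remaining steps (the two arc-counts and the compatibility of the glued arcs at the puncture) are routine.
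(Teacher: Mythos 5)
Your proposal follows essentially the same route as the paper's own proof: cut the punctured polygon along the weight-$1$ arc incident with the puncture, observe that the arcs compatible with it form a frieze of type $A_{n-1}$ on the resulting unpunctured polygon, and invoke Conway--Coxeter \cite{ConCox73} to extract a weight-$1$ triangulation. The ``main obstacle'' you flag (that the tagged-arc relations unfold to ordinary Ptolemy relations after cutting) is exactly the assertion the paper itself makes without further justification -- it is the standard fact that the cluster variables compatible with a fixed variable of value $1$ form a frieze of the link type -- so your argument matches the paper's in both strategy and substance, while being more explicit about the geometry and about the ``in particular'' clause.
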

\begin{proof}{Assume that an arc of a frieze of type $D_n$ incident with the puncture has weight $1$.  Then all the arcs compatible with this one form a frieze of type $A_{n-1}$; in particular, by \cite{ConCox73}, there is a triangulation consisting of arcs of weight $1$.
}
\end{proof}

\emph{Step 3} Add all un-notched arcs compatible with the ones already there and connecting marked points on the boundary to the puncture.

Then $T_0$ is a well-defined triangulation which satisfies conditions (1) and (2) of Proposition \ref{prop::description}. Moreover, the arcs added in step 1 cut $T_0$ into smaller unpunctured triangulated discs and one ``central'' punctured triangulated disc.  If we cut away all unpunctured disc, we are left with a triangulation of a punctured disc $\mathcal{P}_m$ (with $m\geq 2$, since arcs with their endpoints on the boundary never cut out a monogon) on which all arcs having their endpoints on the boundary have weight $\geq 2$.  Thus we are in the situation of the following Lemma:

\begin{center}
\begin{figure}
\begin{tikzpicture}[scale=0.7]
\draw[very thick] (0,0) circle (3) ;
\foreach \x in {1,2,...,5} {
\draw (-72*\x:3) node {$\bullet$} ;
\draw (-72*\x+135 :3.6) node {\x} ;
} ;
\draw (0,0) node {$\bullet$} ;
\draw[thick] (288:3) -- (72:3) node[midway, right, scale=0.85] {1} ;
\draw[thick] (288:3) -- (0,0) node[midway, right, scale=0.85] {2} ;
\draw[thick] (72:3) -- (0,0) node[midway, left, scale=0.85] {2} ;
\draw[thick] (144:3) -- (0,0) node[midway, left, scale=0.85] {2} ;
\draw[thick] (216:3) -- (0,0) node[midway, right, scale=0.85] {2} ;
\end{tikzpicture}
\caption{The triangulation $T_0$ for the $D_5$ frieze in Figure~\ref{figu::frieze}. The arcs with weight $7$ are the arcs of length $4$ from vertices $1$ to $2$ and from $2$ to $3$.}
\end{figure}
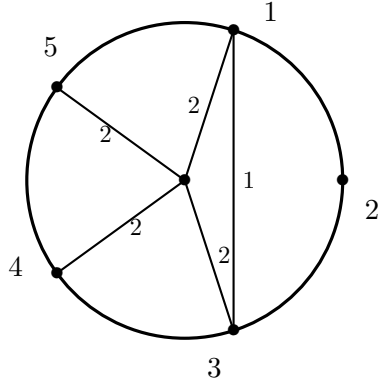
\end{center}

\begin{lemma}\label{lemm::no1}
 Assume that a frieze on a punctured disc $\mathcal{P}_m$ (with $m\geq 2$) is such that all arcs not incident to the puncture have weight greater than $1$.  If $m\geq 3$, then
 \begin{enumerate}
  \item All un-notched arcs incident with the puncture have the same weight.  The same is true for notched arcs.
  \item Any arc not incident with the puncture and forming a $(d+1)$-gon not containing the puncture has weight $d$.
  \item If $t$ and $u$ are the weights of the notched and un-notched arcs, respectively, incident with the puncture, then $tu=m$.
 \end{enumerate}
 If $m=2$, then there are two compatible arcs having weight $1$.
\end{lemma}
\begin{proof}{The case $m=2$ has to be treated separately, so assume first that $m\geq 3$. Let $a_1, a_2, \ldots, a_m$ be the weights of the un-notched arcs incident with the puncture, in clockwise order. Without loss of generality, we can assume that $a_1 \geq a_i$ for all $i\in \{1,\ldots,m\}$.  Let $t_i$ be the weight of the arc forming a triangle with the arcs weighted $a_i$ and $a_{i+2}$, where the indices are viewed modulo $m$.  Figure \ref{figu::weights} illustrates this in an octogon.
\begin{center}
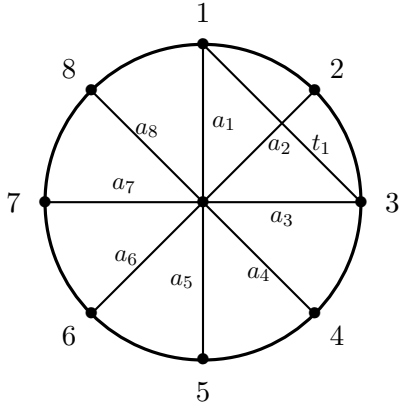
\begin{figure}[h]
\begin{tikzpicture}[scale=0.7]
\draw[very thick] (0,0) circle (3) ;
\foreach \x in {1,2,...,8} {
\draw (-45*\x:3) node {$\bullet$} ;
\draw (-45*\x+135 :3.6) node {\x} ;
} ;
\draw (0,0) node {$\bullet$} ;
\draw[thick] (-45*1+135:3) -- (0,0) node[midway, right, scale=0.85] {$a_1$} ;
\draw[thick] (-45*2+135:3) -- (0,0) node[midway, right, scale=0.85] {$a_2$} ;
\draw[thick] (-45*3+135:3) -- (0,0) node[midway, below, scale=0.85] {$a_3$} ;
\draw[thick] (-45*4+135:3) -- (0,0) node[midway, below, scale=0.85] {$a_4$} ;
\draw[thick] (-45*5+135:3) -- (0,0) node[midway, left, scale=0.85] {$a_5$} ;
\draw[thick] (-45*6+135:3) -- (0,0) node[midway, left, scale=0.85] {$a_6$} ;
\draw[thick] (-45*7+135:3) -- (0,0) node[midway, above, scale=0.85] {$a_7$} ;
\draw[thick] (-45*8+135:3) -- (0,0) node[midway, above, scale=0.85] {$a_8$} ;
\draw[thick] (-45*1+135:3) -- (-45*3+135:3) node[near end, above, scale=0.85] {$t_1$} ;

\end{tikzpicture}
\caption{Labelling of the weights.}
\label{figu::weights}
\end{figure}
\end{center}
For each $i$, there is a Ptolemy relation $a_{i+1}t_i = a_i+a_{i+2}$.  By our assumptions, $t_i\geq 2$, and by maximality of $a_1$, we get
\begin{displaymath}
	2a_1 \leq a_1 t_m = a_m+a_2 \leq 2a_1,
\end{displaymath}
so $2a_1 = a_m+a_2$, which implies that $a_m=a_1=a_2$, again by maximality of $a_1$.  This argument propagates around the polygon, so by induction, we get that all the $a_i$'s are equal.  This proves part (1) for un-notched arcs; the proof for notched arcs is the same.

By the above relations, $a_{i+1}t_i = a_i + a_{i+2} = 2a_{i+1}$, so $t_i=2$ for all $i$.  Thus all arcs forming a triangle with the boundary have weight $2$.  We prove (2) by induction from here: assume that for a given $d$, all arcs forming a $(d+1)$-gon with the boundary have weight $d$.  Let $z$ be the weight of an arc forming a $(d+2)$-gon with the boundary.  Then there is a Ptolemy relation of the form $a_{i+d}z = a_{i} + a_{i+d+1}d$, so $a_{i+d}z = a_{i+d}(1+d)$, and therefore $z=d+1$.  Part (2) is proved.

Part (3) follows from part (2) and from the relation on the right in Figure \ref{figu::relations}.

The case $m=2$ (\emph{i.e} $\mathcal{P}_m$ is a digon) is proved by noticing that triangulations of the punctured digon are associated to cluster algebras (or friezes) of type $A_1\times A_1$.  There are only four arcs, they all touch the puncture, they have weight $1$ or $2$, and exactly two of them have weight $1$ and are compatible.
}
\end{proof}

Now, construct the triangulation $T$ as follows: if $T_0$ has exactly two arcs touching the puncture, both having the same boundary endpoint (and thus different notchings), then take $T=T_0$.  Else, if the arcs added in step 2 were notched, replace them by their un-notched version to get $T$.  Then $T$ still satisfies condition (1) and (2), and it follows from Lemma \ref{lemm::no1} that $T$ satisfies condition (3).  Indeed, cutting along all arcs of weight $1$, we are left with a smaller punctured polygon whose arcs not incident to the puncture have weight at least $2$ and form a frieze of type $D$.  Thus Lemma \ref{lemm::no1} applies.  This finishes the proof of Proposition \ref{prop::description}.

%In other words, friezes in type $D_m$ without any $1$ all look like
%\begin{displaymath}
% \xymatrix@-1.5pc{ 2\ar[dr] && 2\ar[dr] && 2\ar[dr] && \cdots \\
%            & 3\ar[ur]\ar[dr] && 3\ar[ur]\ar[dr] && 3\ar[ur]\ar[dr] && \cdots \\
%            && 4\ar[ur]\ar[dr] && 4\ar[ur]\ar[dr] && 4\ar[ur]\ar[dr] && \cdots \\
%            &&& \cdots\ar[ur]\ar[dr] && \cdots\ar[ur]\ar[dr] && \cdots\ar[ur]\ar[dr] && \cdots \\
%            &&&& m-1\ar[ur]\ar[r]\ar[dr] & d\ar[r] & m-1\ar[ur]\ar[r]\ar[dr] & m/d \ar[r] & m-1\ar[ur]\ar[r]\ar[dr] & d\ar[r] & \cdots \\
%            &&&&& m/d\ar[ur] && d\ar[ur] && m/d\ar[ur]
% }
%\end{displaymath}
%with $d$ a divisor of $m$.
%
%\emph{Proof of Proposition \ref{prop::description}.} 

%.......................
\subsection{Triangulations of punctured $n$-gons}\label{sect::triangulations}

In this section, we replace all notched arcs connecting a boundary marked point $M$ to the puncture by the corresponding arc joining $M$ to $M$ and cutting out a punctured monogon.

 Let $T_{n,m}$ be the number of triangulations of a once-punctured $n$-gon with exactly $m$ un-notched arcs, or spokes, from the outer marked points to the inner puncture.
\begin{theorem}\label{theo::countingTriang}
$T_{n,m}=\binomial{2n-m-1}{n-1}$.
\end{theorem}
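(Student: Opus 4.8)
The plan is to count these triangulations by first decomposing a triangulation along its spokes and then reassembling the pieces with a generating-function (cycle-lemma style) argument. Fix $m\geq 1$ and record the structure of a triangulation with exactly $m$ spokes. The spokes land on a set $S$ of $m$ of the $n$ boundary vertices and cut the punctured polygon into $m$ sectors, the $i$-th sector containing $k_i$ consecutive boundary edges, where $k_i\geq 1$ and $k_1+\cdots+k_m=n$. Since no arc of the triangulation may cross a spoke, the triangulation restricts to an independent triangulation of each sector, so a triangulation with spoke-set $S$ is precisely a choice of one triangulation in each sector. A sector with $k_i$ boundary edges is a $(k_i+2)$-gon having the puncture as a corner; because there are no further spokes, the puncture has degree two there, its triangle is forced, and the sector reduces to triangulating the $(k_i+1)$-gon spanned by its boundary vertices, which can be done in $C_{k_i-1}$ ways, where $C_k=\frac{1}{k+1}\binom{2k}{k}$ is the Catalan number. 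The case $m=1$ needs a little care: there the unique spoke is the interior edge of a self-folded triangle whose enclosing loop plays the role of the forced diagonal, and one checks that the same count $C_{k_1-1}=C_{n-1}$ applies. This gives $T_{n,m}=\sum_{|S|=m}\prod_{i}C_{k_i-1}$, a sum over $m$-subsets of the cyclically labelled boundary.

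Next I would linearize this cyclic sum by rooting. Counting pairs consisting of a triangulation together with a distinguished spoke in two ways gives $mT_{n,m}=n\sum C_{k_1-1}\cdots C_{k_m-1}$, where on the right the sum now runs over all compositions $k_1+\cdots+k_m=n$ into $m$ positive parts. Indeed, each of the $T_{n,m}$ triangulations has $m$ spokes to distinguish, while fixing the landing vertex of the distinguished spoke (there are $n$ choices) and reading the gaps clockwise turns the configuration into such a composition weighted by $\prod_i C_{k_i-1}$. In terms of the Catalan generating function $C(x)=\sum_{k\geq 0}C_kx^k$ this reads $mT_{n,m}=n[x^n]\big(xC(x)\big)^m$, so that $T_{n,m}=\frac{n}{m}[x^{n-m}]C(x)^m$.

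Finally I would invoke the classical power formula $[x^{j}]C(x)^m=\frac{m}{j+m}\binom{2j+m-1}{j}$, a one-line consequence of Lagrange inversion applied to the functional equation $C=1+xC^2$. Taking $j=n-m$ yields $[x^{n-m}]C(x)^m=\frac{m}{n}\binom{2n-m-1}{n-m}$, and hence $T_{n,m}=\frac{n}{m}\cdot\frac{m}{n}\binom{2n-m-1}{n-m}=\binom{2n-m-1}{n-1}$, as claimed. I expect the algebra of the last two paragraphs to be entirely routine; the real work, and the step most prone to error, is the geometric decomposition, in particular verifying that each sector contributes exactly $C_{k_i-1}$ and that the degenerate $m=1$ self-folded case fits the same pattern.
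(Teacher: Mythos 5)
Your proposal is correct and takes essentially the same route as the paper: both decompose the triangulation along its spokes into independently triangulated Catalan-counted sectors, derive $T_{n,m}=\tfrac{n}{m}\sum\prod_j C_{i_j}$ by a rooting argument, and finish by extracting the coefficient of $x^{n-m}$ in the $m$-th power of the Catalan generating function (your power formula $[x^{j}]C(x)^m=\tfrac{m}{j+m}\binomial{2j+m-1}{j}$ is exactly the paper's ballot-number formula in disguise). The only differences are cosmetic: you index sectors by boundary edges rather than interior vertices, and your double count of pairs (triangulation, distinguished spoke) is a slightly crisper phrasing of the paper's rotate-then-divide-by-$m$ argument.
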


\begin{lemma} We have $T_{n,m}=\frac{n}{m}\sum_{i_1+\cdots+i_m=n-m} \prod_j C_{i_j}$, where $C_n$ is the $n$-th Catalan number.
\end{lemma}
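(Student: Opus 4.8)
The plan is to set up a bijective decomposition of the triangulations counted by $T_{n,m}$ and then perform a standard cyclic-to-linear counting transfer that accounts for the factor $\frac{n}{m}$ and the sum over compositions. First I would observe that the $m$ spokes of such a triangulation join the puncture to an $m$-element subset $S$ of the $n$ boundary vertices, and that these spokes cut the punctured $n$-gon into $m$ regions, one for each pair of cyclically consecutive spokes. Writing $b_1,\ldots,b_m$ (with $b_k\geq 1$ and $\sum_k b_k = n$) for the numbers of boundary edges lying between consecutive spoke endpoints, the $k$-th region is a polygon bounded by two spokes and a boundary path of $b_k$ edges, hence has $b_k+2$ vertices: the puncture together with $b_k+1$ boundary vertices.

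Next I would argue that the number of spokes being exactly $m$ forces the combinatorics inside each region. Since the puncture has degree $2$ in such a triangulation, the triangle incident to each spoke must be the one cutting off the chord joining the two endpoints of the consecutive spokes; removing this forced triangle leaves a $(b_k+1)$-gon. Therefore region $k$ admits exactly $C_{b_k-1}$ triangulations, and as the regions are triangulated independently,
\begin{displaymath}
 T_{n,m} = \sum_{|S|=m}\ \prod_{k=1}^m C_{b_k(S)-1},
\end{displaymath}
the sum ranging over all $m$-subsets $S$ of the boundary vertices and $b_k(S)$ denoting the cyclic gaps determined by $S$. To finish, I would linearize this cyclic sum: for each subset $S$ and each distinguished element $v\in S$, reading the gaps clockwise from $v$ gives an ordered composition $(b_1,\ldots,b_m)$ of $n$ into $m$ positive parts, and since $\prod_k C_{b_k-1}$ is invariant under cyclic rotation we have $\sum_{v\in S}\prod_k C_{b_k(S)-1} = m\prod_k C_{b_k(S)-1}$. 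Summing over all $S$ and using the bijection between pointed subsets $(S,v)$ and pairs (basepoint $v\in\{1,\ldots,n\}$, composition $(b_1,\ldots,b_m)$) yields
\begin{displaymath}
 m\,T_{n,m} = \sum_{v=1}^n\ \sum_{\substack{b_1+\cdots+b_m=n\\ b_j\geq 1}}\ \prod_{k=1}^m C_{b_k-1} = n\sum_{\substack{b_1+\cdots+b_m=n\\ b_j\geq 1}}\ \prod_{k=1}^m C_{b_k-1},
\end{displaymath}
and the substitution $i_j = b_j-1\geq 0$ turns the constraint into $i_1+\cdots+i_m = n-m$, giving the claimed identity.

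I expect the main obstacle to be the careful justification that each region contributes exactly $C_{b_k-1}$ triangulations: one must verify that the triangle at the puncture is genuinely forced and that no extra spoke can slip in. In particular, the degenerate case $m=1$, where the single spoke sits inside a self-folded triangle rather than separating two distinct regions, has to be checked to fit the same count $C_{n-1}$, so that the uniform formula $n\,C_{n-1} = \binomial{2n-2}{n-1}$ is recovered from the general argument.
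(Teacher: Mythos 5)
Your proof is correct and takes essentially the same approach as the paper's: decompose the triangulation along the spokes into regions each contributing a Catalan number, then pass from the cyclic count to a linear one over compositions, producing the factor $\frac{n}{m}$. Your pointed-subset bijection is a cleaner rendering of the paper's informal rotation/overcounting argument, and your explicit attention to the forced triangle at the puncture and to the self-folded case $m=1$ addresses details the paper glosses over.
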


\begin{proof} Given a triangulation of the punctured $n$-gon with $m$ spokes, the portion of the triangulation between two adjacent spokes is an honest triangulation of a $(k+2)$-gon, where $k$ is the number of vertices contained in between the two spokes. The two extra vertices are the end points of the spokes themselves. Thus there are $C_k$ possible triangulations that fit between the two given spokes. The total number of vertices not involved with the spokes is $n-m$, so we partition $n-m$ into $m$ non-negative pieces $i_1+\cdots+i_m=n-m$ with $i_j\geq 0$. Fix one of the spokes as a starting point, then we should see $\sum_{i_1+\cdots+i_m=n-m}\prod_j C_{i_j}$ triangulations. This under counts the true number since rotating a triangulation one step can give a different triangulation. Thus if we multiply by $n$, the total number of possible rotations, we would count each triangulation at least once. But we are ignoring the fact that we fixed one of the $m$ spokes, so we are now over counting by a factor of 
$m$.
 This leaves us with $T_{n,m}=\frac{n}{m}\sum_{i_1+\cdots+i_m=n-m} \prod_j C_{i_j}$.
\end{proof}

We can now prove Theorem \ref{theo::countingTriang}:
\begin{proof}
Recall that $c(x)=\frac{1-\sqrt{1-4x}}{2x}$ is the generating function for the Catalan numbers. The coefficient of $x^n$ in $(c(x))^k$ is known as the ballot number $B(n,k)$ and has closed form $B(n,k)=\frac{k}{2n+k}\binomial{2n+k}{n}$. But, $T_{n,m}=\frac{n}{m}B(n-m,m)=\binomial{2n-m-1}{n-1}$.
%need a reference here for Ballot numbers? This is essentially due to Catalan directly? 1887 sur les nombres de Segner. Rend Circ Mat Pal 1, pp190 201  --- Neat reference, I've never cited such and old paper :-) (PG)
\end{proof}

Since the generating function for the $k$ ballot numbers is $(c(x))^k$, then the sum $1+(c(x))y+(c(x))^2y^2+\cdots=\frac{1}{1-yc(x)}$ is a two variable generating function for the ballot numbers. If we examine $\frac{1}{1-xyc(x)}$, then we see that the coefficient for $x^ny^m$ is $B(n-m,m)$. 

\begin{proposition} The generating function for $T_{n,m}$ is $\frac{1}{(c(x)-2)(1-xyc(x))}$.
\end{proposition}

\begin{proof}
Note that $\frac{1}{1-xyc(x)}$ is almost a generating function for $T_{n,m}$, it is off by a factor of $\frac{n}{m}$ in term $x^ny^m$. This can be corrected by integration and differentiation: $$\int\left(\frac{x}{y}\frac{d}{dx}\left(\frac{1}{1-xyc(x)}\right)\right)dy=\frac{c(x)+xc'(x)}{c(x)(xyc(x)-1)}.$$ One can check that $1+\frac{xc'(x)}{c(x)}=\frac{1}{2-c(x)}$, in which case the generating function becomes $\frac{1}{(2-c(x))(1-xyc(x))}$.
\end{proof}

%.......................
\subsection{Counting friezes}
We can now prove our main theorem.

\begin{theorem}\label{theo::main}
The number of $D_n$ friezes is $\sum_{m=1}^n d(m)\binomial{2n-m-1}{n-m}$.
\end{theorem}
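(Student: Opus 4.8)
The plan is to realize the number of $D_n$ friezes as the number of pairs $(T,d)$, where $T$ is a triangulation of the once-punctured $n$-gon whose $m$ spokes are untagged and $d$ is a divisor of $m$, and then to evaluate this count. The assignment of such a pair to a frieze is furnished by Proposition~\ref{prop::description}: a frieze determines the triangulation $T$ it contains, and the common weight $d$ of the $m$ spokes of $T$ is a divisor of $m$ by Lemma~\ref{lemm::no1}(3).

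I would first verify that this assignment is injective. A frieze is an evaluation of every cluster variable, and the exchange relations of Figure~\ref{figu::relations} propagate the weights on the arcs of any single triangulation to all tagged arcs; hence a frieze is determined by its restriction to one triangulation. On the extracted triangulation $T$ the weights are completely pinned down by Proposition~\ref{prop::description}: the $n-m$ arcs joining boundary vertices carry weight $1$ by~(2), and the $m$ spokes carry weight $d$ by~(3). Thus $(T,d)$ reconstructs the frieze, and the assignment is injective.

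The substantive step is surjectivity: that every pair $(T,d)$ with $d\mid m$ is realized by a frieze (necessarily unique by the previous paragraph). I would argue by cutting along the $n-m$ weight-$1$ arcs of $T$, which splits the punctured $n$-gon into the boundary fans together with a central once-punctured $m$-gon triangulated solely by the $m$ spokes. Since cutting along weight-$1$ arcs matches friezes on the whole polygon with friezes on the pieces, it suffices to produce a frieze on the central $m$-gon in which every spoke has weight $d$. Assigning weight $d$ to all spokes and propagating through the relations of Figure~\ref{figu::relations} reproduces exactly the values of Lemma~\ref{lemm::no1}: an arc cutting off a $(k+1)$-gon from the boundary receives weight $k$, and the tagged spokes receive weight $m/d$, which is a positive integer precisely because $d\mid m$. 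All entries are positive integers, so the assignment is a genuine frieze, and gluing the fans back yields the desired frieze on the $n$-gon. I expect this realizability to be the main obstacle, since it must be checked uniformly over all divisors $d$ — including the extreme cases $d=1$ and $d=m$, where the untagged, respectively tagged, spokes degenerate to weight $1$ — and it is the explicit integrality in Lemma~\ref{lemm::no1} that makes it work.

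It then remains to count the pairs. For each $m\in\{1,\dots,n\}$ there are $T_{n,m}$ triangulations with $m$ spokes, and for each such triangulation there are exactly $d(m)$ admissible choices of the common spoke-weight; hence the number of friezes is $\sum_{m=1}^{n}d(m)\,T_{n,m}$. Substituting $T_{n,m}=\binomial{2n-m-1}{n-1}$ from Theorem~\ref{theo::countingTriang} and applying the symmetry $\binomial{2n-m-1}{n-1}=\binomial{2n-m-1}{n-m}$ gives $\sum_{m=1}^{n}d(m)\binomial{2n-m-1}{n-m}$, as claimed.
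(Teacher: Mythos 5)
Your proof follows essentially the same route as the paper: extract from each frieze the pair (triangulation, common spoke weight) via Proposition~\ref{prop::description}, count such pairs as $\sum_{m=1}^n d(m)\,T_{n,m}$, and substitute the formula of Theorem~\ref{theo::countingTriang} (together with the symmetry $\binomial{2n-m-1}{n-1}=\binomial{2n-m-1}{n-m}$, which the paper uses silently). You are in fact more thorough than the paper's own three-line proof, which leaves both the injectivity of this assignment and the realizability of every pair $(T,d)$ as an integral frieze implicit.
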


\begin{proof}

A frieze is determined by its weights on a single cluster, or (tagged) triangulation.  This, together with Proposition \ref{prop::description}, tells us that the number of friezes is $\sum_{m=1}^n d(m)T_{n,m}$, where $T_{n,m}$ is as in section \ref{sect::triangulations}.  The result follows from Theorem \ref{theo::countingTriang}.

%We will show that any frieze has a particularly nice (tagged) triangulation where all arcs between exterior vertices have weight $1$ and the remaning $m$ 'spokes' into the puncture are (non tagged?) arcs all have the same weight, a divisor of $m$.

%Given a frieze, consider the set of all arcs between exterior vertices of weight $1$. As noted in the finiteness proof, this is a non-crossing set. More over, if we consider an arc of weight $1$ joining the vertices $i$ and $j$, it splits the punctured $n$-gon into a punctured $k$-gon and an $n-k$-gon. By the work of Coxter-Conway ref{} the $n-k$ gon is a frieze of $A_{n-k-3}$ type, and thus there is a triangulation of it where all arcs are weight $1$. This shows that if one also includes a maximal (non crossing) set of spokes along with the arcs of weight $1$ between exterior vertices, we have a triangulation. More over, if the maximal set of spokes is of size $m$, then then the finiteness proof shows that we have a frieze if and only iff all of the $m$ spokes are labelled with a divisor of $m$

%Since the number of such triangulation is $T_{n,m}$ is follows that the number of friezes is $\sum_{m=1}^n d(m)T_{n,m}$.

%--I've transformed this argument into a separate proposition. (PG).
\end{proof}

%--------------------------------------------
\section{Friezes of other Dynkin types}

In order to deal with non-ADE type quivers, one must switch to the world of cluster algebras defined by a skew-symmetrizable exchange matrix $(b_{ij})$. In such a matrix, there exists strictly positive integers $d_i$ such that $d_ib_{ij}=-d_jb_{ji}$. From this data, it is possible to create a valued quiver: If $b_{ij}$ is strictly positive, we add an arrow from vertex $j$ to $i$ labelled $(b_{ij},-b_{ji})$. This places at most one arrow between any pair of vertices since either both $b_{ij}$ and $b_{ji}$ are $0$ or exactly one is negative. Once can thus move back and forth between valued quivers (and the data $d_i$) and skew-symmetrizable matrices.

Following the folding method of \cite{Dupont08}, given a quiver $\Delta$ coming from a simply-laced Dynkin diagram and a group $G$ of automorphisms, one can obtain the quotient quiver $\Delta/G$ by identifying the vertices that lie in the same orbit. We also identify the edges that lie in the same orbit. The resulting edges are given the label $(1,1)$, except in the special case that multiple edges are identified that share an endpoint. If $i$ such edges are identified, then if the resulting edge is directed away from the vertex, it is given the label $(i,1)$; otherwise, it is given the label $(1,i)$.

In particular, if one were to apply the construction to the standard $D_4$ quiver with edges oriented outwards from the central vertex and take the quotient via the order $3$ automorphism, the resulting graph has $2$ vertices with one edge labelled $(1,3)$, and its exchange matrix corresponds to the Cartan matrix for $G_2$.

To summarize, if $\Delta$ is a simply laced Dynkin quiver and $G$ a group of automorphisms, then $\Delta/G$ is a valued quiver. Dupont concludes that the action of $G$ lifts to the cluster algebra $A(\Delta)$, thus by \cite[Corollary 5.16]{Dupont08}, $A(\Delta/G)$ can be identified with a subalgebra of $A(\Delta)/G$. Moreover, \cite[Theorem 7.3]{Dupont08} gives equality since $\Delta$ is Dynkin. The projection $\pi:A(\Delta)\to A(\Delta)/G$ can then be thought of as a surjective ring homomorphism from $A(\Delta)$ to $A(\Delta/G)$, which sends the cluster variables of $A(\Delta)$ to the cluster variables of $A(\Delta/G)$ via a quotient by $G$.

\begin{lemma} Let $\Delta$ by a Dynkin quiver and $G$ a group of automorphisms, then each $\Delta/G$ frieze gives rise to a $\Delta$ frieze. Moreover, each $\Delta$ frieze that is $G$-invariant descends to a $\Delta/G$ frieze.
\end{lemma}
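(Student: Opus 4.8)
The plan is to build everything out of the surjective ring homomorphism $\pi\colon A(\Delta)\to A(\Delta/G)$ recalled above, keeping in mind that a frieze is exactly a ring homomorphism to $\mathbb{Z}$ that sends every cluster variable to a positive integer. So both directions reduce to formal manipulations with $\pi$, together with the two structural facts supplied by Dupont's folding: that $\pi$ is surjective and that it carries cluster variables of $A(\Delta)$ onto cluster variables of $A(\Delta/G)$.

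For the first assertion I would simply precompose with $\pi$. Given a $\Delta/G$ frieze $f\colon A(\Delta/G)\to\mathbb{Z}$, the composite $g=f\circ\pi\colon A(\Delta)\to\mathbb{Z}$ is a ring homomorphism. To see that it is a frieze, take any cluster variable $x$ of $A(\Delta)$; since $\pi$ sends cluster variables to cluster variables, $\pi(x)$ is a cluster variable of $A(\Delta/G)$, so $g(x)=f(\pi(x))$ is a positive integer. Hence $g$ is a $\Delta$ frieze, and it is automatically $G$-invariant because $\pi$ is constant on $G$-orbits.

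For the second assertion I would use the identification $A(\Delta/G)=A(\Delta)/G$ with $\pi$ the canonical projection. Let $g\colon A(\Delta)\to\mathbb{Z}$ be a $G$-invariant $\Delta$ frieze. For each $\sigma\in G$, the ring homomorphisms $g$ and $g\circ\sigma$ agree on the cluster variables, and the cluster variables generate $A(\Delta)$ as a ring, so invariance on cluster variables upgrades to the honest identity $g\circ\sigma=g$. Therefore $g$ descends through the quotient $A(\Delta)/G\cong A(\Delta/G)$, i.e. it factors uniquely as $g=f\circ\pi$ for a ring homomorphism $f\colon A(\Delta/G)\to\mathbb{Z}$. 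It remains to check positivity: every cluster variable $v$ of $A(\Delta/G)$ is of the form $\pi(x)$ for some cluster variable $x$ of $A(\Delta)$, so $f(v)=g(x)$ is a positive integer, and $f$ is a $\Delta/G$ frieze.

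I expect the main obstacle to lie not in the composition arguments, which are a formal diagram chase, but in pinning down the folding dictionary precisely enough to justify the two facts the chase rests on: that $\pi$ maps the cluster variables of $A(\Delta)$ \emph{onto} those of $A(\Delta/G)$ (needed for both positivity checks), and that the identification $A(\Delta/G)=A(\Delta)/G$ genuinely allows a $G$-invariant homomorphism to factor through $\pi$. Both are exactly what \cite[Corollaries 5.16 and 7.3]{Dupont08} provide, so once these are invoked the remaining verifications are routine.
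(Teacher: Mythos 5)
Your proof is correct and follows essentially the same route as the paper: lift a $\Delta/G$ frieze by precomposing with $\pi$, and descend a $G$-invariant $\Delta$ frieze through the identification of $A(\Delta)/G$ with $A(\Delta/G)$ from \cite{Dupont08}. The only difference is that you spell out the positivity checks (via $\pi$ carrying cluster variables onto cluster variables), which the paper leaves implicit.
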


\begin{proof}
For the first part, if we consider a $\Delta/G$ frieze to be a ring homomorphism from the cluster algebra $A(\Delta/G)$ to $\mathbb{Z}$, then composing with the map $\pi$ gives a $\Delta$ frieze.

For the second part, a $\Delta$ frieze that is $G$-invariant descends to a ring homomorphism from $A(\Delta)/G$ to $\mathbb{Z}$ and thus gives a $\Delta/G$ frieze under the identification of $A(\Delta)/G$ with $A(\Delta/G)$.
\end{proof}

For the case of $B_n$, $C_n$ and $G_2$, these are quotients of $D_{n+1}$, $A_{2n-1}$ and $D_{4}$ respectively where the automorphisms we use are the maps swapping the short arms of $D_{n+1}$, mirroring $A_{2n-1}$ through the middle vertex and the order $3$ rotation of $D_4$.

\begin{theorem}\label{theo::C} The number of $C_n$ friezes is $\binomial{2n}{n}$.
\end{theorem}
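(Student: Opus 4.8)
The plan is to count $C_n$ friezes by exploiting the folding of $A_{2n-1}$ under the order-$2$ automorphism $\sigma$ that mirrors the path quiver through its central vertex. By the preceding lemma, the $C_n$ friezes are in bijection with the $\sigma$-invariant $A_{2n-1}$ friezes: the projection $\pi$ sends each $\sigma$-invariant $A_{2n-1}$ frieze down to a $C_n$ frieze, and conversely each $C_n$ frieze lifts (via composition with $\pi$) to an $A_{2n-1}$ frieze that is automatically $\sigma$-invariant because $\pi$ factors through the $G$-quotient. So I would first state explicitly that counting $C_n$ friezes is the same as counting $\sigma$-invariant friezes in type $A_{2n-1}$.

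Next I would invoke the Conway--Coxeter model for type $A_N$ friezes \cite{ConCox73}: friezes of type $A_{2n-1}$ correspond to triangulations of a convex polygon with $N+3 = 2n+2$ vertices (equivalently, to Catalan-many triangulations of an $(N+3)$-gon), and the symmetry $\sigma$ of the Dynkin quiver corresponds to the reflection of this polygon across a diameter. Thus a $\sigma$-invariant $A_{2n-1}$ frieze corresponds to a triangulation of a $(2n+2)$-gon that is symmetric under this reflection. The count I want is therefore the number of such symmetric (centrally mirror-symmetric) triangulations of a polygon with an even number $2n+2$ of vertices, where the reflection axis is chosen so that it passes between two pairs of vertices (matching the fact that $\sigma$ has no fixed vertex on the $A_{2n-1}$ diagram, $2n-1$ being odd).

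The core combinatorial step is then to show that the number of such mirror-symmetric triangulations of a $(2n+2)$-gon equals $\binom{2n}{n}$. Here the reflection axis is a diameter not passing through any vertex, so it must cross exactly one diagonal of the triangulation (the unique edge crossing the axis); folding along the axis identifies the two halves, and the data of a symmetric triangulation is equivalent to a triangulation of one half together with the choice of the central crossing diagonal. I would carry out the count by reducing a symmetric triangulation of the $(2n+2)$-gon to an arbitrary triangulation of a region with $n+2$ boundary vertices on the fundamental-domain side, and then identify the resulting count with the central binomial coefficient $\binom{2n}{n}$, for instance by a generating-function argument using $c(x)$ exactly as in the $T_{n,m}$ computation above, or by a direct bijective argument.

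The main obstacle I expect is the careful bookkeeping in this last step: getting the reflection axis placed correctly relative to the $2n+2$ vertices (so that it separates the polygon into two genuine mirror images with no fixed vertex), correctly describing which diagonal is forced to lie on the axis, and verifying that the fundamental-domain data is counted without over- or under-counting. In particular I must check that no symmetric triangulation is fixed by a nontrivial further symmetry in a way that would distort the count, and that the single axis-crossing diagonal is handled consistently. Once the symmetric triangulations are parametrized cleanly, recognizing the count as $\binom{2n}{n}$ is the kind of standard Catalan/ballot-number identity already used for $T_{n,m}$, so the analytic part should follow the same template rather than requiring a new idea.
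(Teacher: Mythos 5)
Your reduction to counting $\sigma$-invariant $A_{2n-1}$ friezes, and then (via Conway--Coxeter) to counting $\sigma$-invariant triangulations of the $(2n+2)$-gon, is sound and is essentially the paper's starting point. The genuine gap is in your identification of the polygon symmetry realizing $\sigma$. You claim it is the reflection across a diameter passing \emph{between} vertices, ``matching the fact that $\sigma$ has no fixed vertex on the $A_{2n-1}$ diagram.'' But $\sigma$ \emph{does} have a fixed vertex: $A_{2n-1}$ has an odd number of vertices and the flip fixes the central one (you even describe $\sigma$ as mirroring ``through its central vertex'' earlier, contradicting yourself). More fatally, the correct geometric realization of $\sigma$ is the \emph{antipodal map} (rotation by $180^{\circ}$), sending an arc to the arc with diametrically opposite endpoints: one checks this on a snake triangulation, whose quiver is the alternating orientation of $A_{2n-1}$ (the orientation for which the flip is actually a quiver automorphism, as the folding requires). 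Your edge-to-edge reflection fixes \emph{no} triangulation at all: the boundary edge crossed by the axis lies in a unique triangle of any triangulation, and the reflection would have to send that triangle to a different triangle containing the same boundary edge (the third vertex cannot be fixed), a contradiction. So your proposed count of ``mirror-symmetric triangulations'' is $0$, not $\binom{2n}{n}$, and the core combinatorial step of your plan collapses.

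Two remarks on repairing and comparing. With the correct (antipodal) symmetry, your folding strategy does work and is a legitimate alternative to the paper: a centrally symmetric triangulation of the $(2n+2)$-gon contains exactly one diameter ($n+1$ choices), and cutting along it leaves an arbitrary triangulation of an $(n+2)$-gon ($C_n$ choices), giving $(n+1)C_n=\binom{2n}{n}$ directly, with no generating functions needed. The paper instead sidesteps counting symmetric triangulations: it uses Conway--Coxeter to show that the weight-$1$ arcs of a $G$-invariant $A_{2n-1}$ frieze form a $G$-invariant triangulation, concludes via Dupont's folding that every $C_n$ frieze is unitary (takes value $1$ on some cluster), and then simply cites the Fomin--Zelevinsky count of $C_n$ clusters, $\binom{2n}{n}$. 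Either route is fine once the symmetry is identified correctly; as written, your proof fails at that step.
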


\begin{proof}
Since $C_n$ is a folding of $A_{2n-1}$, by the above lemma, each $C_n$ frieze can be lifted to a unique $A_{2n-1}$ frieze which is $G$-invariant. One can check that the action of $G$ on the $A_{2n-1}$ cluster variables is given by the following action on the arcs of a $2n+2$-gon: take an arc and map it to the arc whose end points are diametrically opposed to the originals. Recall from \cite{ConCox73} that the set of arcs in the $2n+2$-gon that are labeled $1$ must form a triangulation. But the image of each arc labeled $1$ under $G$ is also an arc labeled $1$, so the triangulation is $G$-invariant. Thus we have a $G$-invariant cluster in $A_{2n-1}$ on which the frieze evaluates to $1$, but by \cite{Dupont08}, this descends to a cluster of $C_n$. 

Thus each $C_n$ frieze is determined by fixing one cluster with every variable being $1$ and the number of $C_n$ friezes is the number of $C_n$ clusters, $\binomial{2n}{n}$ (see \cite[Table 3]{FZ03}).
\end{proof}

\begin{theorem}\label{theo::B} The number of $B_n$ friezes is $\sum_{1\leq m\leq\sqrt{n+1}}\binomial{2n-m^2+1}{n}$.
\end{theorem}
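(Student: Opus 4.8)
The plan is to mimic the proof of Theorem \ref{theo::C}, exploiting that $B_n$ is the folding of $D_{n+1}$ by the group $G$ generated by the automorphism swapping the two short arms. By the folding lemma, a $B_n$ frieze $f$ lifts to $f\circ\pi$, which is $G$-invariant because $\pi\circ g=\pi$ for all $g\in G$, and conversely every $G$-invariant $D_{n+1}$ frieze factors through $A(\Delta)/G\cong A(B_n)$; these assignments are mutually inverse, so $B_n$ friezes are in bijection with $G$-invariant $D_{n+1}$ friezes. It therefore suffices to count the latter.

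The first substantive step is to identify the geometric action of $G$ on the cluster variables of $D_{n+1}$, that is, on the tagged arcs of the punctured $(n+1)$-gon. The two short arms of $D_{n+1}$ correspond exactly to the two taggings of a spoke at the puncture, so $G$ acts as the tagging-change involution: it exchanges the tagged and untagged spoke at each boundary vertex and fixes every arc joining two boundary points. Hence a $D_{n+1}$ frieze is $G$-invariant if and only if, at each boundary vertex, its tagged and untagged spokes carry the same weight.

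Next I would pin down which friezes satisfy this using Proposition \ref{prop::description} and Lemma \ref{lemm::no1}. By Proposition \ref{prop::description} a $D_{n+1}$ frieze yields a triangulation with $k$ untagged spokes, all of some weight $d$ dividing $k$; cutting along the weight-$1$ arcs leaves a reduced punctured $k$-gon with no weight-$1$ arcs, where Lemma \ref{lemm::no1}(3) gives that the untagged spokes have weight $d$ and the tagged spokes have weight $k/d$, their product being $k$. Thus $G$-invariance forces $d=k/d$, i.e. $k=d^2$ is a perfect square and the common spoke weight is $\sqrt{k}$. Consequently, for each triangulation with $k$ spokes there is a $G$-invariant frieze exactly when $k$ is a perfect square, and then it is unique: the divisor factor $d(k)$ appearing in Theorem \ref{theo::main} collapses to a single admissible weight.

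Finally, writing $k=m^2$ and letting the number of spokes range over $1,\ldots,n+1$ (so that $m^2\leq n+1$), the count becomes $\sum_{m\leq\sqrt{n+1}}T_{n+1,m^2}$, and Theorem \ref{theo::countingTriang} gives $T_{n+1,m^2}=\binomial{2(n+1)-m^2-1}{(n+1)-1}=\binomial{2n-m^2+1}{n}$, which is the claimed formula. I expect the main obstacle to be the two items in the middle of the argument: cleanly justifying that $G$ is the tagging-change involution in Schiffler's model, and verifying the invariance criterion $k=d^2$ at the degenerate boundary cases (in particular $d=1$, where the reduced-polygon setup of Lemma \ref{lemm::no1} does not literally apply and one must instead read the tagged-spoke weight off the exchange relations of Figure \ref{figu::relations}), so as to confirm that no non-square spoke number contributes.
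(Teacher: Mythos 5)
Your proposal is correct and follows essentially the same route as the paper: lift $B_n$ friezes to $G$-invariant $D_{n+1}$ friezes via folding, note that $G$ exchanges each parallel tagged/untagged spoke pair so invariance forces equal weights on each pair, use Lemma \ref{lemm::no1}(3) to conclude that the number of spokes must be a perfect square with the common spoke weight its square root, and then sum $T_{n+1,m^2}$ via Theorem \ref{theo::countingTriang}. If anything, you are slightly more careful than the paper, which passes silently over the degenerate weight-$1$ spoke case that you flag at the end.
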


\begin{proof}
Since $B_n$ is a folding of $D_{n+1}$, each $B_n$ frieze lifts to $D_{n+1}$ frieze which is $G$-invariant. The two nodes on the end of $D_{n+1}$ which are identified by $G$ correspond to an untagged/tagged pair of parallel arcs in the punctured $n+1$-gon. Thus it follows that the label assigned to each pair is the same. Now as outlined in the calculation of the $D_{n+1}$ friezes, when we decompose a frieze into a partial triangulation of all arcs labeled $1$, and a $D_m$ frieze containing no 1's, the $D_m$ contains at least one spoke from the $D_{n+1}$ frieze. Moreover, in Lemma \ref{lemm::no1}, we see that the product of an untagged spoke with its parallel tagged spoke in the $D_m$ frieze is $m$. Thus $m$ must be a perfect square and moreover, the only $D_m$ frieze which is allowed is a frieze with the square root labeling the spokes. Applying this reduction to the $D_{n+1}$ formula results in the given formula.
\end{proof}

\begin{theorem}\label{theo::G} The number of $G_2$ friezes is $9$.
\end{theorem}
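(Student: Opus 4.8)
The plan is to proceed exactly as in the proofs of Theorems \ref{theo::C} and \ref{theo::B}. Since $G_2$ is the folding of $D_4$ by the order-three triality automorphism $G$, the lemma above identifies the $G_2$ friezes with the $G$-invariant $D_4$ friezes, so I want to count the fixed points of $G$ acting on the set of $D_4$ friezes. By Theorem \ref{theo::main} (or by \cite{MGOT12}) this set has $51$ elements, so in principle the computation is finite.

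The feature that makes $G_2$ genuinely different from $B_n$ and $C_n$ is that triality is \emph{not} induced by any symmetry of the once-punctured square: that surface has no automorphism of order three. Hence $G$ does not respect the geometric type of an arc --- it can send a spoke to an arc between boundary points --- and in particular it need not preserve the number $m$ of spokes nor the weight $d$ produced by Proposition \ref{prop::description}. One therefore cannot argue stratum-by-stratum on the pairs $(T,d)$ as in the proof of Theorem \ref{theo::B}. I would instead follow $G$ through its action on the sixteen cluster variables, that is, on the almost positive roots of $D_4$: writing the central simple root as fixed and the three outer ones as a three-cycle, $G$ fixes four cluster variables and arranges the remaining twelve into four orbits of size three. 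A $G$-invariant frieze is precisely a positive-integer weight function constant on these orbits, determined by its values on the fixed central arc and one orbit representative; the $D_4$ relations then leave only finitely many candidates, and these are exactly the lifts of the $G_2$ friezes.

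To organize the final count I would separate the fixed friezes according to whether some arc \emph{incident with the puncture} has weight $1$. If one does, then by the lemma of Hugh Thomas recalled after Proposition \ref{prop::description} the frieze contains a triangulation of weight-$1$ arcs; such a triangulation is forced to be $G$-invariant, and descending it along the folding gives a $G_2$ cluster on which the frieze is identically $1$. As in the proof of Theorem \ref{theo::C}, this matches the \emph{unitary} $G_2$ friezes bijectively with the $G_2$ clusters, of which there are $8$ by \cite[Table 3]{FZ03}. At the opposite extreme, a frieze with no arc of weight $1$ at all is governed by Lemma \ref{lemm::no1} applied with $m=4$: the tagged and untagged spoke weights satisfy $xy=4$ with $x,y\geq 2$, forcing $x=y=2$. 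There is thus a single such frieze, with every spoke of weight $2$; being unique it is automatically $G$-invariant and provides the ninth frieze.

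The step I expect to be the real obstacle is ruling out any intermediate fixed frieze --- one carrying weight-$1$ arcs that fail to triangulate, i.e.\ with $d\geq 2$ but $m<4$. Because $G$ mixes boundary arcs with spokes and does not preserve the $(m,d)$-stratification, their absence cannot be read off the parameters and must be checked through the root-orbit action described above (equivalently, since only $51$ friezes are involved, by a direct verification in the spirit of our other computations). Granting that every remaining $D_4$ frieze lies in a free $G$-orbit of size three, the fixed friezes are exactly the $8$ unitary ones together with the all-spokes-weight-$2$ frieze, giving $8+1=9$.
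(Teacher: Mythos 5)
Your overall strategy coincides with the paper's: identify $G_2$ friezes with $G$-invariant $D_4$ friezes via the folding lemma, then count the latter as the $8$ unitary ones (matching the $8$ clusters of $G_2$) plus the unique frieze with no weight-$1$ arcs. Your Case A and Case B arguments are sound, and your observation that triality is not induced by a symmetry of the punctured square (so it mixes spokes with chords) is correct. The problem is the step you yourself flag and then skip: you never rule out ``intermediate'' $G$-invariant friezes, ending instead with ``granting that every remaining $D_4$ frieze lies in a free $G$-orbit of size three.'' As written this is a genuine gap: the conclusion $8+1=9$ depends on it, and you offer only the suggestion that it could be checked by an unperformed computation on root orbits or by inspecting all $51$ friezes.

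What makes the gap unnecessary is that the intermediate case is empty, and this follows from the paper's own classification, with no $G$-equivariance needed. Suppose a frieze has weight-$1$ arcs but none incident with the puncture. Cutting along all weight-$1$ arcs leaves a punctured $m$-gon with $m\leq 3$ carrying a frieze with no weight-$1$ arcs; Lemma \ref{lemm::no1}(3) then gives tagged and untagged spoke weights $x,y\geq 2$ with $xy=m\leq 3$, a contradiction. Moreover, your description of the would-be intermediate friezes is mistaken: the friezes with ``$d\geq 2$ but $m<4$'' (i.e.\ $d=m\in\{2,3\}$) do exist, but the exchange relations force their tagged spokes to have weight $m/d=1$, so their weight-$1$ arcs (the weight-$1$ chords together with the tagged spokes) form a tagged triangulation --- they are unitary friezes and are already included in your Case A count of $8$. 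Once this is observed, every $D_4$ frieze is either unitary or the all-spokes-weight-$2$ frieze (this is the $50+1=51$ dichotomy implicit in the paper's proof), and your count $8+1=9$ closes without any orbit-by-orbit verification.
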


\begin{proof}
Since $G_2$ is a folding of $D_4$, each $G_2$ frieze lifts to a $D_4$ frieze which is $G$-invariant. Of the $50$ $D_4$ friezes which come from setting a cluster to all $1$'s, only $8$ are $G$-invariant and thus correspond to the $8$ $G_2$ friezes which also come from setting a cluster to all $1$'s. The remaining frieze assigns $2$ to the outer nodes of $D_4$ and $3$ to the center, and this is also $G$-invariant, so it descends to the sole remaining $G_2$ frieze, leaving us with $9$ friezes.
\end{proof}

What remains are the sporadic $E_6$, $E_7$, $E_8$ and $F_4$. 

\begin{conjecture}\label{conj::other}
The number of $E_6$, $E_7$ and $E_8$ frieze are $868$, $4400$ and $26952$ respectively. Since $F_4$ is a folding of $E_6$, the number of $F_4$ friezes is 112.
\end{conjecture}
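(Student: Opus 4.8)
The plan is to prove the statement by exhaustive computer enumeration, made rigorous by the boundedness conjecture stated just above. The starting observation is that a frieze of type $\Delta$ is completely determined by the positive integers it assigns to the cluster variables of a single chosen cluster $\mathbf{x}=(x_1,\ldots,x_n)$: every other cluster variable is obtained from $x_1,\ldots,x_n$ by a finite sequence of mutations, and each mutation is governed by an exchange relation $v\,v' = M_1+M_2$ with $M_1,M_2$ monomials in previously computed weights. Thus counting friezes is the same as counting the integer tuples $(a_1,\ldots,a_n)\in\mathbb{Z}_{>0}^n$ for which propagating these relations through the (finite) exchange graph produces a positive integer at every cluster variable.

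First I would make this search finite. Because $\Delta$ is of Dynkin type there are only finitely many clusters, hence finitely many \emph{unitary} friezes, each computable by setting all variables of one cluster to $1$ and propagating. For each cluster variable $v$ let $N(v)$ be the largest value $v$ attains over all unitary friezes. Invoking the preceding conjecture, the value of \emph{any} frieze at $v$ is at most $N(v)$; in particular the initial weights satisfy $a_i\le N(x_i)$, confining the search to a bounded box.

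Next I would carry out a backtracking search over this box with aggressive constraint propagation. Assign values to $x_1,\ldots,x_n$ within their bounds; after each partial assignment, apply every exchange relation whose inputs are already determined and demand that the result be a positive integer not exceeding its own bound $N(v)$. The divisibility forced by $v\,v' = M_1+M_2$ prunes the tree sharply, so that only the genuine friezes survive. Counting the survivors should yield $868$, $4400$ and $26592$ in types $E_6$, $E_7$ and $E_8$. For $F_4$ I would instead use the folding $F_4 = E_6/G$ with $G$ the order-two diagram automorphism: by the folding lemma, $F_4$ friezes correspond to the $G$-invariant $E_6$ friezes, and selecting these among the $868$ gives $112$ (equivalently, one runs the same enumeration directly in type $F_4$).

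The crux --- and the reason the statement is only a conjecture --- is the boundedness hypothesis $a(v)\le N(v)$, on which completeness of the enumeration entirely rests: without it the search could miss friezes with anomalously large weights, so the computed numbers would only be lower bounds. A genuine proof would require establishing this bound intrinsically, for instance through a positivity or growth estimate on frieze values coming from the cluster category, or through an explicit combinatorial model for the exceptional types analogous to Schiffler's punctured-polygon model for $D_n$ --- but no such model is known for the $E$ series. A secondary, purely practical obstacle is the size of the computation in type $E_8$, whose exchange graph has on the order of tens of thousands of clusters; here the effectiveness of the divisibility pruning is what keeps the enumeration tractable.
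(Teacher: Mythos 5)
Your proposal matches the paper's approach exactly: the authors also enumerate friezes by a computer search whose completeness rests on the preceding boundedness conjecture (bounding each value by its maximum over unitary friezes), and they likewise obtain the $F_4$ count from the folding of $E_6$. You also correctly identify why the statement remains a conjecture rather than a theorem --- the enumeration is only guaranteed to be exhaustive if the boundedness hypothesis holds, which is precisely the paper's situation.
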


These numbers are obtained by computer computation but depend on the next conjecture. It should be noted that a certain subset of friezes is easy to obtain, namely the unitary friezes (see \cite{MG02}), which arise from setting all cluster variables in a single cluster to $1$.

\begin{conjecture}
The value of a frieze at a node in a Dynkin diagram is less than the maximal value of the node over the set of unitary friezes.
\end{conjecture}

Since the set of unitary friezes is computable (i.e. using Sage for instance), this puts an easily computed maximal bound on the entries in a frieze. This conjecture is true for type $A_n$ and $C_n$ since the only friezes there are unitary. The case of $B_n$ and $G_2$ would follow from the case of $D_n$ and we prove this case below:

\begin{theorem} Given $F$, a $D_n$ frieze, it has an ideal triangulation $T$, as determined above. Let k be the number of spokes in $T$. Let $T_1$ be the frieze whose ideal triangulation is $T$ with the spokes in $T$ labelled by $k$ and $T'_1$ the same but with spokes labelled by $1$. Suppose that for an arc $a$, $F(a)$ is the label of $F$ at $a$. Then if $a$ is untagged we have $F(a)\leq T_1(a)$ and if $a$ is tagged then $F(a)\leq T'_1(a)$.
\end{theorem}

Note that both $T_1$ and $T'_1$ are unitary friezes. In the case of $T'_1$ this is clear, and in the case of $T_1$ it is sufficient to note that if one swaps the spokes of $T$ for the corresponding tagged spokes, all arcs are now labelled $1$.

The above theorem is a direct consequence of the following two lemmas:

\begin{lemma}
Consider the $D_n$ cluster algebra and set all cluster variables corresponding to the spokes in $T$ to a new variable x. Given any untagged spoke in $D_n$, the Laurent polynomial in terms of the cluster $T$ factors as $xf$ where $f$ is a Laurent polynomial in the cluster variables from the non-spoke arcs in $T$.
\end{lemma}

\begin{proof}
Take any untagged spoke in $D_n$, if this spoke is in $T$, then we are done. Otherwise let $A$ be the vertex at which the spoke ends. It lives between two spokes in $T$, ending at vertices $B$ and $C$. The arc $BC$ in $T$ and hence is a cluster variable $y$. By $BA$ and $CA$, we mean the arcs between the given vertices that lie between the two spokes ending at $B$ and $C$. Let $b$ and $c$ be their Laurent polynomials in terms of the cluster for $T$. We note that $b$ and $c$ involve no cluster variables that correspond to spokes in $T$. In particular they only involve non-spoke arcs lieing between the spokes $B$ and $C$. The Laurent polynomial for the spoke ending at $A$, after setting all spokes in $T$ to $x$ is $(xb+xc)/y=x(b+c)/y$ where $(b+c)/y$ is Laurent in the non-spoke variables in $T$.
\end{proof}

We can also swap the words tagged and untagged in the above statement and proof. This proves the above theorem for all tagged and untagged spokes, since the maximum and minimum value of $x$ that can occur in a frieze are $1$ and $k$ and the portion $f$ doesn't depend on the the value of $x$.

\begin{lemma}
Consider an arc in $D_n$ between two boundary vertices and its Laurent polynomial in terms of the cluster $T$. If we set all cluster variables corresponding to spokes in $T$ to $x$ then the resulting Laurent polynomial does not depend on x.
\end{lemma}

\begin{proof}
Let $B$ and $C$ be the end points of the arc. We then induct on the the (minimal) number of spokes in $T$ the arc crosses. If it is $0$, then the arc is either already in $T$ or is contained in a region of $T$ bounded by non spoke arcs and hence can be obtained by mutating within this region. If it is $1$, then then let $A$ be the end point of the single spoke in $T$ that the arc crosses. By $BA$ and $CA$ we denote the arcs between the respective vertices that do not cross the given arc between $B$ and $C$, i.e. so that the triangle $ABC$ does not contain the puncture. Let $b$ and $c$ be the Laurent polynomials for $BA$ and $CA$ after setting all spoke variables in $T$ to $x$. Since $BA$ and $CA$ cross no spokes, $b$ and $c$ are independent of $x$. Let $i$, $j$ and $k$ be the the Laurent polynomials for the spokes ending at $A$, $B$ and $C$ after setting all spoke variables in $T$ to $x$. By the above lemma $i=x*\tilde{i}$, $j=x*\tilde{j}$ and $k=x*\tilde{k}$ where $\tilde{i}$, $\tilde{j}$ and $\tilde{k}$ are independent of x. In fact $\tilde{i}=1$ since $A$ is the end point of a spoke in $T$. But then the Laurent polynomial for $BC$ is $$\frac{x\tilde{j}c+x\tilde{k}b}{x\tilde{i}} = \frac{\tilde{j}c+\tilde{k}b}{\tilde{i}}$$ which no longer depends on x. 

Now suppose that the arc crosses $m$ spokes in $T$. Pick a boundary vertex $A$ that lies (not necessarly strictly) between the spokes that the arc crosses. Pick arcs $BA$ and $AC$ as above, they then cross strictly fewer spokes in $T$, so by induction do not have Laurent polynomials depending on $x$. But the spokes to $A$, $B$ and $C$ do, as in the above calculation shows that the Laurent polynomial for the arc $BC$ does not depend on $x$.
\end{proof}

This shows that we have $F(a)=T_1(a)=T'_1(a)$ for all arcs between boundary vertices and the theorem is proved.

The listing of friezes and the programs used to generate them are available at \cite{PlamondonWebpage}.

%------------------------------------------------

\end{document}